\newtheorem{theorem}{Theorem}
\theoremstyle{definition}
\newtheorem{remark}{Remark}[section]
\newtheorem{proposition}[remark]{Proposition}
\newtheorem{observation}[remark]{Observation}
\newtheorem{definition}[remark]{Definition}
\numberwithin{equation}{section}
\renewcommand{\thetheorem}{\Roman{theorem}}
\def\red{\color{red}}
\def\blue{\color{blue}}
\def\black{\color{black}}
\newcommand{\A}{\mathcal{A}}
\newcommand{\B}{\mathcal{B}}
\newcommand{\M}{\mathcal{M}}
\newcommand{\N}{ \mathrm{N}}
\newcommand{\V}{\mathcal{V}}
\newcommand{\E}{\mathcal{E}}
\newcommand{\W}{\mathcal{W}}
\newcommand{\C}{ \mathbb{C}}
\newcommand{\BY}{ \mathbf{Y}}
\newcommand{\BN}{ \mathbf{N}}
\mathchardef\mhyphen="2D
\newcommand{\verteq}[0]{\begin{turn}{90} $=$ \end{turn}}
\newcommand{\cd}[2][]{\vcenter{\hbox{\xymatrix#1{#2}}}}
\newcommand{\ltwocell  }[3][0.5]{\ar@{}[#2] \ar@{=>}?(#1)+/r 0.2cm/;?(#1)+/l 0.2cm/^{#3}}
\newcommand{\rtwocell  }[3][0.5]{\ar@{}[#2] \ar@{=>}?(#1)+/l 0.2cm/;?(#1)+/r 0.2cm/^{#3}}
\newcommand{\dtwocell  }[3][0.5]{\ar@{}[#2] \ar@{=>}?(#1)+/u 0.2cm/;?(#1)+/d 0.2cm/^{#3}}
\newcommand{\utwocell  }[3][0.5]{\ar@{}[#2] \ar@{=>}?(#1)+/d 0.2cm/;?(#1)+/u 0.2cm/_{#3}}
\newcommand{\congcell  }[2][0.5]{\ar@{}[#2]|(#1){\cong}}
\newcommand{\twosimp}[7]
{
  \cd{
  & {#2} \ar[dr]^{#5} \ar@{}[d]|(0.6){#7} \\
    {#1} \ar[rr]_{#6} \ar[ur]^{#4} & & {#3} }
}
\newcommand{\tricataxiom}[1]{
\begin{gathered}
\def\baselen{#1}
\begin{xy}
	0;<\baselen,0mm>:
	*{\xybox{
		\POS(0,-0.5)*+{\ee}="one"
		\POS(1,1)*+{\aa}="two"
		\POS(3,1)*+{\bb}="three"
		\POS(4,0.0)*+{\cc}="four"
		\POS(0,-1.5)*+{\ff}="five"
		\POS(1,-3)*+{\gg}="six"
		\POS(3,-3)*+{\hh}="seven"
		\POS(4,-2)*+{\ii}="eight"
		\POS(4.5,-1)*+{\dd}="nine"
		\POS(2,0)*+{\jj}="ten"
		\POS(2,-2)*+{\kk}="eleven"
		\POS(3,-1)*+{\ll}="twelve"
		\ar"two";"one"_-{\abcdO}
		\ar"two";"three"^-{\adefO}
		\ar"three";"four"^-{\acdf}
		\ar"one";"five"_-{\abde}
		\ar"five";"six"_-{\bcdeO}
		\ar"six";"seven"_-{\abefO}
		\ar"seven";"eight"_-{\bcef}
		\ar"eight";"nine"_-{\cdefO}
		\ar"four";"nine"^-{\abcfO}
		\ar"one";"ten"|-{\adef}
		\ar"three";"ten"|-{\abcd}
		\ar"ten";"twelve"|-{\abdf}
		\ar"twelve";"nine"|-{\bcdf}
		\ar"five";"eleven"|-{\abef}
		\ar"eleven";"twelve"|-{\bdef}
		\ar"eleven";"seven"|-{\bcde}
		\POS(1.5 ,-1.0)*+{\Downarrow \CCC}="not2"
		\POS(3.1 , 0.0)*+{\Downarrow \EEE}="not4"
		\POS(3.1 ,-2.0)*+{\Downarrow \AAA}="not0"
		\POS(1.5 , 0.5)*+{\cong}="naturaliso1"
		\POS(1.5 ,-2.5)*+{\cong}="naturaliso2"
		%\POS(2,-3.37)*+{\verteq}="equals"
}}
\end{xy}  \\ \verteq \\
 \def\baselen{#1}
\begin{xy}
0;<\baselen,0mm>:
*{\xybox{
		\POS(1,1)*+{\aa}="one"	
		\POS(3,1)*+{\bb}="two"
		\POS(4,0)*+{\cc}="three"
		\POS(4.5,-1)*+{\dd}="four"
		\POS(0,-0.5)*+{\ee}="five"
		\POS(0,-1.5)*+{\ff}="six"
		\POS(1,-3)*+{\gg}="seven"
		\POS(3,-3)*+{\hh}="eight"
		\POS(4,-2)*+{\ii}="nine"
		\POS(1.5,-1.0)*+{\mm}="ten"
		\POS(3,-1.0)*+{\nn}="eleven"
		\ar"one";"two"^-{\adefO}
		\ar"two";"three"^-{\acdf}
		\ar"three";"four"^-{\abcfO}
		\ar"one";"five"_-{\abcdO}
		\ar"five";"six"_-{\abde}
		\ar"six";"seven"_-{\bcdeO}
		\ar"seven";"eight"_-{\abefO}
		\ar"eight";"nine"_-{\bcef}
		\ar"nine";"four"_-{\cdefO}
		\ar"one";"ten"|-{\acde}
		\ar"ten";"eleven"|-{\acef}
		\ar"eleven";"three"|-{\cdef}
		\ar"ten";"seven"|-{\abce}
		\ar"eleven";"nine"|-{\abcf}
		\POS(2.5 , 0.0)*+{\Downarrow \BBB}="not1"
		\POS(0.75 ,-1.0)*+{\Downarrow \FFF}="not5"
		\POS(2.5 ,-2.0)*+{\Downarrow \DDD}="not3"
		\POS(3.75,-1.0)*+{\cong}="naturaliso3"
%		\ar@{=>}(0,-5);(1,-5)_{f}
}}
\end{xy}
\end{gathered}
}
\newcommand{\stringpent}{
\begin{gathered}
\vcenter{\hbox{\begin{tikzpicture}[y=0.8pt, x=0.8pt,yscale=-1, inner sep=0pt, outer sep=0pt, every text node
part/.style={font=\scriptsize} ]
  \path[draw=black,line join=miter,line cap=butt,line width=0.650pt]
    (0.5761,996.5573) .. controls (25.8299,996.0522) and (35.7165,994.4461) ..
    node[above right=0.12cm,at start] {$\!\!(\cde 1) 1$}(48.0259,1000.9550);
  \path[draw=black,line join=miter,line cap=butt,line width=0.650pt]
    (0.5761,1020.4270) .. controls (32.7994,1020.6795) and (38.4294,1009.3612) ..
    node[above right=0.12cm,at start] {$\!\bce 1$}(48.0259,1003.0477);
  \path[draw=black,line join=miter,line cap=butt,line width=0.650pt]
    (49.8969,1000.5044) .. controls (58.2307,980.6554) and (151.0215,979.4784) ..
    node[above right=0.25cm,pos=0.4] {$\mathfrak{a} 1$}(166.1738,987.8122);
  \path[draw=black,line join=miter,line cap=butt,line width=0.650pt]
    (49.6135,1003.1578) .. controls (60.9777,1006.6933) and (72.9109,1017.9473) ..
    node[below left=0.05cm,pos=0.57]{$\bde 1$} (80.4466,1027.1123);
  \path[draw=black,line join=miter,line cap=butt,line width=0.650pt]
    (0.5761,1042.0698) .. controls (14.4657,1042.3223) and (75.2981,1040.0698) ..
    node[above right=0.12cm,at start] {$\!\abe$} (80.8539,1029.7127);
  \path[draw=black,line join=miter,line cap=butt,line width=0.650pt]
    (82.2825,1030.8845) .. controls (90.1821,1054.0933) and (173.6677,1051.1703)
    .. node[above left=0.12cm,at end] {$\ade$}(214.7867,1051.3122);
  \path[draw=black,line join=miter,line cap=butt,line width=0.650pt]
    (141.5590,1019.2256) .. controls (143.0743,1011.6494) and (166.0258,992.5776)
    .. node[below right=0.04cm] {$1 \mathfrak{a}$}(166.0258,992.5776);
  \path[draw=black,line join=miter,line cap=butt,line width=0.650pt]
    (141.6203,1021.3685) .. controls (141.6203,1021.3685) and (163.4571,1014.0062)
    .. node[above left=0.12cm,at end] {$1 (1 \abc)\!\!$}(214.7868,1014.7638);
  \path[draw=black,line join=miter,line cap=butt,line width=0.650pt]
    (141.4111,1023.0061) .. controls (141.4111,1023.0061) and (142.5493,1033.5038)
    .. node[above left=0.12cm,at end] {$1 \acd$\!}(214.7868,1033.7564);
  \path[draw=black,line join=miter,line cap=butt,line width=0.650pt]
    (167.5590,988.4831) .. controls (167.5590,988.4831) and (177.6142,980.4754) ..
    node[above left=0.12cm,at end] {$\mathfrak{a}$\!} (214.7868,979.9703);
  \path[draw=black,line join=miter,line cap=butt,line width=0.650pt]
    (167.9162,991.3403) .. controls (172.8729,995.3375) and (185.9479,996.5546) ..
    node[above left=0.12cm,at end] {$\mathfrak{a}$\!}(214.7867,996.5546);
  \path[draw=black,line join=miter,line cap=butt,line width=0.650pt]
    (82.5536,1028.3546) .. controls (96.1906,1037.3648) and (132.5520,1035.8496)
    .. node[below right=0.06cm,pos=0.42] {$1 \abd$} (139.8756,1022.0412);
  \path[draw=black,line join=miter,line cap=butt,line width=0.650pt]
    (82.7084,1025.8212) .. controls (84.1609,1008.5230) and (138.1030,990.5459) ..
    node[above left=0.07cm,pos=0.6] {$\mathfrak{a}$}(166.0786,990.2620)
    (49.8145,1001.8228) .. controls (60.2802,1001.0663) and (79.7259,1003.0374) ..
    node[above right=0.08cm,pos=0.4,rotate=-9] {$(1 \bcd) 1$}(98.0839,1006.4270)
    (105.9744,1007.9878) .. controls (121.2679,1011.2244) and (134.6866,1015.3648) ..
    node[below left=0.08cm,pos=0.7,rotate=-15] {$1 (\bcd 1)$}(139.8779,1019.5827);
  \path[fill=black] (166.83624,990.0094) node[circle, draw, line width=0.65pt,
    minimum width=5mm, fill=white, inner sep=0.25mm] (text3313) {$
      \pi$    };
  \path[fill=black] (48.501251,1001.512) node[circle, draw, line width=0.65pt,
    minimum width=5mm, fill=white, inner sep=0.25mm] (text3305) {$\bcde 1$    };
  \path[fill=black] (81.600487,1027.7435) node[circle, draw, line width=0.65pt,
    minimum width=5mm, fill=white, inner sep=0.25mm] (text3309) {$\abde$    };
  \path[fill=black] (145.38065,1021.0309) node[circle, draw, line width=0.65pt,
    minimum width=5mm, fill=white, inner sep=0.25mm] (text3317) {$1 \abcd$  };
\end{tikzpicture}}} \ = \
\vcenter{\hbox{\begin{tikzpicture}[y=0.8pt, x=0.7pt,yscale=-1, inner sep=0pt, outer sep=0pt, every text node part/.style={font=\scriptsize} ]
  \path[draw=black,line join=miter,line cap=butt,line width=0.650pt]
    (0.5761,1010.9221) .. controls (22.7994,1011.1746) and (33.5330,1012.5294) ..
    node[above right=0.12cm,at start] {\!$\bce 1$}(49.1780,1027.2420);
  \path[draw=black,line join=miter,line cap=butt,line width=0.650pt]
    (0.5761,1042.0698) .. controls (14.4657,1042.3223) and (43.6151,1040.3578) ..
    node[above right=0.12cm,at start] {\!$\abe$}(49.1709,1030.0007);
  \path[draw=black,line join=miter,line cap=butt,line width=0.650pt]
    (51.1755,1030.5965) .. controls (67.4280,1056.6856) and (126.7968,1041.6654)
    .. node[below right=0.14cm] {$\ace$} (139.8511,1033.1665);
  \path[draw=black,line join=miter,line cap=butt,line width=0.650pt]
    (142.8500,1030.5854) .. controls (142.8500,1030.5854) and (184.5612,1028.3914)
    .. node[above left=0.12cm,at end] {$1 \acd$\!}(225.8909,1029.1490);
  \path[draw=black,line join=miter,line cap=butt,line width=0.650pt]
    (142.6407,1033.9511) .. controls (142.6407,1033.9511) and (172.9964,1047.0248)
    .. node[above left=0.12cm,at end] {$\ade$\!}(225.8909,1046.9894);
  \path[draw=black,line join=miter,line cap=butt,line width=0.650pt]
    (51.8895,1026.1092) .. controls (64.3077,997.3630) and (142.1189,975.3128) ..
    node[above left=0.12cm,at end] {$\mathfrak{a}$\!}(225.8909,975.6050)
    (52.8868,1028.6427) .. controls (64.4224,1023.3508) and (87.4860,1019.3533) ..
    node[below right=0.1cm,pos=0.53] {$1 \abc$} (113.0660,1016.4776)
    (121.4833,1015.5795) .. controls (135.8648,1014.1237) and (150.7159,1013.0048) ..
    node[above=0.1cm] {$1 \abc$} (164.5150,1012.1936)
    (173.4370,1011.7035) .. controls (191.3809,1010.7870) and (206.7880,1010.4220) ..
    node[above left=0.12cm,at end] {$1 (1 \abc)\!\!$} (225.8909,1010.5363)
    (143.3647,1028.4425) .. controls (173.9707,1013.3776) and (183.0558,992.0340) ..
    node[above left=0.12cm,at end] {$\mathfrak{a}$\!}(225.8909,992.6100)
    (1.1521,981.0038) .. controls (30.9106,980.4472) and (58.1432,987.9204) ..
    node[above right=0.12cm,at start] {$\!\!(\cde 1) 1$}(81.9942,998.1170)
    (88.1242,1000.8286) .. controls (108.0234,1009.9228) and (125.3927,1020.6737) ..
    node[above right=0.1cm,pos=0.22] {$\cde 1$}
    (139.6961,1029.7577);
  \path[fill=black] (49.917496,1028.8956) node[circle, draw, line width=0.65pt,
    minimum width=5mm, fill=white, inner sep=0.25mm] (text3309) {$\abce$    };
  \path[fill=black] (139.03424,1031.976) node[circle, draw, line width=0.65pt,
    minimum width=5mm, fill=white, inner sep=0.25mm] (text3317) {$\acde$  };
\end{tikzpicture}}}
\end{gathered}
}
\begin{document}

\title{The Catalan Simplicial Set II}
\author{Mitchell Buckley}
\date{30 October, 2014}

\maketitle

\begin{abstract}
The Catalan simplicial set $\C$ is known to classify skew-monoidal categories in the sense that a map from $\C$ to a suitably defined nerve of $\mathrm{Cat}$ is precisely a skew-monoidal category~\cite{Catalan1}.
We extend this result to the case of skew monoidales internal to any monoidal bicategory $\B$.
We then show that monoidal bicategories themselves are classified by maps from $\C$ to a suitably defined nerve of $\mathrm{Bicat}$ and extend this result to obtain a definition of skew-monoidal bicategory that aligns with existing theory.
\end{abstract}

\section{Introduction}\label{sec:intro}

Skew-monoidal categories generalise Mac~Lane's notion of monoidal category~\cite{ML1963} by dropping the requirement of invertibility of the associativity and unit constraints.
They were introduced recently by Szlach\'anyi~\cite{Szl2012} in his study of bialgebroids, which are themselves an extension of the notion of quantum group.
Monoidal categories and skew-monoidal categories can be further generalised to notions of \emph{monoidale} and \emph{skew monoidale} in a monoidal bicategory; this has further relevance for quantum algebra, since Lack and Street showed in~\cite{Smswqc} that quantum categories in the sense of~\cite{QCat} can be described using skew monoidales.

Skew-monoidal categories are only one of many possible generalisations of monoidal category: the orientation of the coherence maps and the number and shape of the axioms could reasonably be chosen otherwise.
The connection with bialgebroids and quantum categories motivates the particular generalisation in current usage, but until recently there was no abstract justification for such a choice.

The Catalan simplicial set $\C$ was introduced in~\cite{Catalan1} where it was shown that, apart from a number of interesting combinatorial properties, it classifies skew-monoidal categories in the sense that simplicial maps from $\C$ into a suitably-defined nerve of $\mathrm{Cat}$ are the same thing as skew-monoidal categories.
This provides some abstract justification for the choices made in describing coherence data for skew-monoidal categories.

The first main goal of this paper is to demonstrate that $\C$ has a further classifying property: for any monoidal bicategory $\B$, simplicial maps from $\C$ into a suitably defined nerve of $\B$ are the same as skew monoidales in $\B$.
More precisely, we construct a biequivalence between the $\mathrm{sSet}(\C,\BN\B)$, the bicategory whose objects are simplicial maps from $\C$ to the nerve of $\B$, and $\mathrm{SkMon}(\B)$, the bicategory of skew-monoidales, lax monoidal morphisms and monoidal transformations. 

Our second main goal is to investigate whether $\C$ has this classifying property for higher-dimensional categories; in particular, whether simplicial maps from $\C$ to a suitably defined nerve of $\mathrm{Bicat}$ are the same as monoidal or skew-monoidal bicategories.
We first describe a nerve for $\mathrm{Bicat}$ by informally regarding it as a monoidal tricategory.
We then find that simplicial maps from $\C$ into this nerve contain some unexpected data which go beyond what is required for a monoidal bicategory.
In the case for monoidal bicategories, when the coherence data are invertible, the unexpected data are essentially trivial and the classification result holds.
In the case for skew-monoidal bicategories, the data are not invertible, and the unexpected data appear to be a problem.
We address this by identifying certain simplices in $\C$ and insist that they be mapped to trivial coherence data.
By considering only simplicials maps satisfying this condition, it is easy to compute the data and axioms of a skew-monoidal bicategory.

In Section~\ref{sec:intro} we provide a general introduction.
In Section~\ref{sec:prelim} we define skew-monoidal categories and re-introduce the Catalan simplicial set.
In Section~\ref{sec:monoidal_bicats} we provide an introduction to monoidal bicategories, skew monoidales, and nerves of monoidal bicategories.
In Section~\ref{sec:direct_biequivalence} we describe a biequivalence between maps from $\C$ to the nerve of a monoidal bicategory $\B$ and skew monoidales in $\B$.
In Section~\ref{sec:skew_monoidal_bicategories} we describe a nerve for $\mathrm{Bicat}$, and define skew-monoidal bicategories by examining certain maps from $\C$ to the nerve of $\mathrm{Bicat}$.

\section{Preliminaries}\label{sec:prelim}

In this section we recall the definition of skew-monoidal category, outline our notation for simplicial sets, and re-introduce the Calatan simplical set defined in~\cite{Catalan1}.

\subsection{Skew-monoidal categories}

A \emph{skew-monoidal category} is a category $\A$ equipped with a a unit element $I \in \A$ and a tensor $\otimes \colon \A \times \A \to \A$ with natural families of maps:
\begin{equation}\label{eq:constraints}
\begin{gathered}
  \lambda_A \colon I \otimes A \to A \qquad \text{and} \qquad
  \rho_A \colon A \to A \otimes I \quad \text{(for $A \in \A$)}\\
  \alpha_{ABC} \colon (A \otimes B) \otimes
  C \to A \otimes (B \otimes C)\quad \text{(for $A,B,C, \in
    \A$)}
\end{gathered}
\end{equation}
satisfying five axioms:
\begin{equation}\label{axiom1}
\cd[@!C@C-2cm]{
  & &(A\otimes B)\otimes (C\otimes D) \ar[drr]^{\alpha}& & \\
  ((A\otimes B)\otimes C)\otimes D \ar[urr]^{\alpha} \ar[dr]_{\alpha\otimes D} & & & & A\otimes (B\otimes (C\otimes D) \\
  & (A\otimes (B\otimes C))\otimes D \ar[rr]_{\alpha} && A\otimes ((B\otimes C)\otimes D) \ar[ur]_{A \otimes \alpha}&
}
\end{equation}
\begin{equation}\label{axiom4}
\cd[@!C@C-6pt]{
  & (A\otimes I)\otimes B \ar[r]^{\alpha} & A\otimes (I\otimes B) \ar[dr]^{A \otimes \lambda} & \\
  A\otimes B \ar[ur]^{\rho \otimes B} \ar[rrr]_{\mathrm{id}} & & & A\otimes B
}
\end{equation}
\begin{equation}\label{axiom3}
\cd[@!C@C-12pt]{
  & I\otimes (A\otimes B) \ar[dr]^{\lambda} & \\
  (I\otimes A)\otimes B \ar[ur]^{\alpha} \ar[rr]_{\lambda \otimes B} &  & A\otimes B
}
\end{equation}
\begin{equation}\label{axiom2}
\cd[@!C@C-12pt]{
  & (A\otimes B)\otimes I \ar[dr]^{\alpha} & \\
  A\otimes B \ar[ur]^{\rho} \ar[rr]_{A \otimes \rho} &  & A\otimes (B\otimes I)
}
\end{equation}
\begin{equation}\label{axiom5}
\cd[@!C@C-6pt]{
  & I\otimes I \ar[dr]^{\lambda} & \\
  I \ar[ur]^{\rho} \ar[rr]_{\mathrm{id}} && I\rlap{ .}
}
\end{equation}

These five axioms are the same as those given in Mac\ Lane's original formulation of monoidal categories~\cite{ML1963}. 
Thus, when $\alpha$, $\lambda$ and $\rho$ are invertible, $\A$ is precisely a monoidal category. 
In that case, Kelly~\cite{Kelly1964} showed that the final three axioms can be derived from the first two, in light of which, some definitions of monoidal category choose to include only those first two axioms. 
The same result does not hold for skew-monoidal categories and so we must list all five.

On a similar note, when $\A$ is a monoidal category the commutativity of these particular diagrams in fact implies the commutativity of \emph{all} such diagrams; this is one form of the coherence theorem for monoidal categories~\cite{CWM}.
Skew-monoidal categories, by contrast, do not have the property that all coherence diagrams commute.
For example, the composite $\rho_I\lambda_I \colon I\otimes I \to I\otimes I$ does not generally equal the identity on ${I\otimes I}$.

\subsection{Simplicial sets}\label{subsec:simplicial_sets}

We write $\Delta$ for the simplicial category; the objects are $[n] = \{0,\dots,n\}$ for $n\ge 0$ and the morphisms are order-preserving functions.
Objects $X$ of $\mathrm{SSet} = [\Delta^{\mathrm{op}}, \mathrm{Set}]$ are called {\em simplicial sets}; we write $X_n$ for $X([n])$ and call its elements \emph{$n$-simplices} of $X$.
We use the notation $d_{i}
\colon X_n \to X_{n-1}$ and $s_i \colon X_n \to X_{n+1}$ for the face and degeneracy maps, induced by acting on $X$ by the maps $\delta_i
\colon [n-1]\to [n]$ and $\sigma_{i} \colon [n+1]\to [n]$ of $\Delta$,
the respective injections and surjections for which $\delta_{i}^{-1}(i) = \emptyset$ and $\sigma_i^{-1}(i) = \{i, i+1\}$.
An $(n+1)$-simplex $x$ is called {\em degenerate} when it is in the image of some $s_i$, and \emph{non-degenerate} otherwise.

A simplicial set is called \emph{$r$-coskeletal} when it lies in the image of the right Kan extension functor $[(\Delta^{(r)})^\mathrm{op},
\mathrm{Set}] \to [\Delta^\mathrm{op}, \mathrm{Set}]$, where $\Delta^{(r)} \subset \Delta$ is the full subcategory on those $[n]$ with $n\le r$.
In elementary terms, a simplicial set is $r$-coskeletal when every $n$-boundary with $n > r$ has a unique filler; here, an \emph{$n$-boundary} in a simplicial set is a collection of $(n-1)$-simplices $(x_0, \dots, x_n)$ satisfying $d_j(x_i)
 = d_i(x_{j+1})$ for all $0 \leqslant i \leqslant j < n$;
a \emph{filler} for such a boundary is an $n$-simplex $x$ with $d_i(x)
 = x_i$ for $i = 0, \dots, n$.

\subsection{The Catalan simplicial set}\label{subsec:catalan_definition}

The Catalan simplicial set $\C$ was introduced and studied in~\cite{Catalan1}; its name derives from the fact that it has a Catalan number of simplices in each dimension. 
There are many ways to characterise $\C$ up to isomorphism; perhaps the most concise and elegant is as the nerve of the monoidal poset $(2, \vee, \bot)$. Here, we will take the following description as basic, since it is most helpful for seeing the connection with skew-monoidal categories.

\begin{definition}\label{def:catalan}
The \emph{Catalan simplicial set} $\C$ is the simplicial set with:
\begin{itemize}
  \item A unique $0$-simplex $\star$;
  \item Two $1$-simplices $s_0(\star) \colon \star \to \star$ and
  $c \colon \star \to \star$;
  \item Five $2$-simplices as displayed in:
  \begin{gather*}
    \twosimp{\star}{\star}{\star}{s_0(\star)}{s_0(\star)}{s_0(\star)}{\substack{\,s_0(s_0(\star))\\=s_1(s_0(\star))}} \qquad
    \twosimp{\star}{\star}{\star}{s_0(\star)}{c}{c}{s_0(c)} \qquad
    \twosimp{\star}{\star}{\star}{c}{s_0(\star)}{c}{s_1(c)} \\
    \twosimp{\star}{\star}{\star}{c}{c}{c}{t} \qquad
    \twosimp{\star}{\star}{\star}{s_0(\star)}{s_0(\star)}{c}{i} \rlap{\quad ;}
  \end{gather*}
  \item Higher-dimensional simplices determined by $2$-coskeletality.
  \end{itemize}
\end{definition}
Since $\C$ is 2-coskeletal, all simplices above dimension one are uniquely determined by their faces and as such, every $n$-simplex $a$ for $n \geq 2$ can be identified with the $(n+1)$-tuple of faces $(d_0(a),d_1(a), \dots, d_{n}(a))$.
By direct computation we find that there are four non-degenerate $3$-simplices
\begin{align*} \label{k_is_here}
a &= (t,t,t,t) \\
\ell &= (i,s_1(c),t,s_1(c)) \\
r &= (s_0(c),t,s_0(c),i) \\
k &= (i, s_1(c), s_0(c), i) \ \text{;}
\end{align*}
and nine non-degenerate $4$-simplices
\begin{equation*}
\begin{tabular}{ l l l }
$A1 = (a,a,a,a,a)$ \hspace{0.7cm} &
$A6 = (s_0(i),r, k, \ell, s_2(i))$ \\
$A2 = (r,s_1(t),a,s_1(t),\ell)$ &
$A7 = (k,r, s_0s_1(c), \ell, k)$  \\
$A3 = (r,r,s_2(t),a,s_2(t))$ &
$A8 = (\ell, s_1(t), s_0(t), \ell, k)$ \\
$A4 = (s_0(t),a,s_0(t),\ell,\ell)$ \hspace{0.7cm} &
$A9 = (k, r, s_2(t), s_1(t), r)$ \\
$A5 = (s_1(i),s_2(i), k, s_0(i), s_1(i))$ &
\end{tabular}\ .
\end{equation*}
The simplices above dimension four will play more of a role in Section~\ref{sec:skew_monoidal_bicategories}.

Now consider a simplicial map $F \colon \C \to \N\mathrm{Cat}$, it is completely determined by its behaviour on non-degenerate simplices.
At dimension 0, $F\star$ is the unique 0-simplex in the nerve of $\mathrm{Cat}$.
At dimension 1, we get a category $Fc$.
At dimension 2, we get two functors $Ft \colon Fc \times Fc \to Fc$ and $Fi \colon I \times I \to Fc$.
At dimension 3, we get four natural transformations.
\begin{equation*}
 \cd[@C-1em]{
    (Fc \times Fc) \times Fc \ar[rr]^-{\cong}
    \rtwocell{drr}{Fa} \ar[d]_{Ft \times 1} & &
    Fc \times (Fc \times Fc) \ar[d]^{1 \times Ft} \\
    Fc \times Fc \ar[r]_-{Ft} & Fc & Fc
    \times Fc \ar[l]^-{Ft}
  }
\end{equation*}
\begin{equation*}
 \cd[@C-1em]{
    (I \times I) \times Fc \ar[rr]^-{\cong}
    \rtwocell{drr}{F\ell} \ar[d]_{Fi \times 1} & &
    I \times (I \times Fc) \ar[d]^{1 \times \cong} \\
    Fc \times Fc \ar[r]_-{Ft} & Fc & I
    \times Fc \ar[l]^-{\cong}
  }
\end{equation*}
\begin{equation*}
 \cd[@C-1em]{
    (Fc \times I) \times I \ar[rr]^-{\cong}
    \rtwocell{drr}{Fr} \ar[d]_{\cong \times 1} & &
    Fc \times (I \times I) \ar[d]^{1 \times Fi} \\
    Fc \times I \ar[r]_-{\cong} & Fc & Fc
    \times Fc \ar[l]^-{Ft}
  }
\end{equation*}
\begin{equation*}
 \cd[@C-1em]{
    (I \times I) \times I \ar[rr]^-{\cong}
    \rtwocell{drr}{Fk} \ar[d]_{Fi \times 1} & &
    I \times (I \times I) \ar[d]^{1 \times Fi} \\
    Fc \times I \ar[r]_-{\cong} & I & I
    \times Fc \ar[l]^-{\cong}
  }
\end{equation*}
The unnamed isomorphisms are canonical maps arising from the monoidal category structure on $\mathrm{Cat}$.
Already we can see the strong resemblance with skew-monoidal categories.
At dimension 5 we get nine axioms concerning transformations $Fa$, $F\ell$, $Fr$, $Fk$.
Among those nine are the Mac Lane pentagon and the four other axioms for a skew-monoidal category.

There is some work to do in sorting out the details, but there is a perfect bijection between skew-monoidal categories and simplicial maps $F \colon \C \to \N\mathrm{Cat}$.
This is the final classification result presented in~\cite{Catalan1} and the result which we seek to generalise.

\section{Monoidal bicategories and skew monoidales}\label{sec:monoidal_bicats}

One way to generalise monoidal categories is to consider monoidales in a monoidal bicategory $\B$.
In this case a monoidal category is precisely a monoidale in $\mathrm{Cat}$.
In the same way, it is possible to generalise skew-monoidal categories by describing skew monoidales in a monoidal bicategory $\B$, in which case, a skew-monoidal category is precisely a skew monoidale in $\mathrm{Cat}$.
This generalisation was put to use by Lack and Street in~\cite{Smswqc}, where it was shown that quantum categories in the sense of~\cite{QCat} are skew monoidales in a monoidal bicategory of comodules.

In the following section, we will show that skew-monoiales in a monoidal bicategory $\B$ correspond with simplicial maps from $\C$ into a suitably defined nerve of $\B$. Our result will take the form of a biequivalence
\begin{equation}\label{AAA}
\mathrm{SkMon}(\B) \simeq \mathrm{sSet}(\C,\BN\B)\ .
\end{equation}
The purpose of the present section is to define the bicategories appearing on each side of~\eqref{AAA}. 
We begin by fixing definitions and notation for monoidal bicategories.
We then define skew monoidales and describe the bicategory $\mathrm{SkMon}(\B)$ of skew monoidales in $\B$ appearing to the left of~\eqref{AAA}. 
Finally we describe a nerve construction for monoidal bicategories assigning to each monoidal bicategory $\B$ a simplicial set $\N\B$, and explain how this simplicial set underlies a simplicial bicategory $\BN\B$; now homming into this simplicial bicategory from $\C$ yields the bicategory $\mathrm{sSet}(\C,\BN\B)$ on the right-hand side of~\eqref{AAA}.

\subsection{Monoidal bicategories}

A \emph{monoidal bicategory} is a one-object tricategory in the sense of~\cite{GPS}; it thus comprises a bicategory $\B$ equipped with a unit object $I$ and tensor product homomorphism $\otimes \colon \B \times \B \to \B$ which is associative and unital only up to pseudonatural equivalences $\mathfrak{a}$, $\mathfrak{l}$ and $\mathfrak{r}$.
The coherence of these equivalences is witnessed by invertible modifications $\pi, \mu, \sigma$ and $\tau$, whose components are $2$-cells with boundaries those of the axioms~\eqref{axiom1}--\eqref{axiom2} above, and an invertible $2$-cell $\theta$ whose boundary is that of~\eqref{axiom5}.
The modifications $\pi, \mu, \sigma$ and $\tau$ are as in~\cite{GPS},
though we write $\sigma$ and $\tau$ for what there are called $\lambda$ and $\rho$; whilst $\theta \colon \mathfrak{r}_I \circ \mathfrak{l}_I \Rightarrow 1_{I \otimes I} \colon {I \otimes I} \to {I \otimes I}$ can be defined from the remaining coherence data as the composite
\begin{equation*}
\begin{tikzpicture}[y=0.65pt, x=0.65pt,yscale=-1, inner sep=0pt, outer sep=0pt, every text node part/.style={font=
\scriptsize} ]
  \path[use as bounding box] (-30, 930) rectangle (235,1045);  \path[draw=black,line join=miter,line cap=butt,line
width=0.650pt]
  (40.0000,1012.3622) .. controls (60.0000,1052.3622) and (140.0000,1029.0471) ..
  node[above right=0.1cm,pos=0.37] {$\mathfrak{l}$}(140.0000,1012.3622) .. controls (140.0000,997.2450) and
(100.0000,997.8136) ..
  node[above=0.07cm] {$\mathfrak{l}^\centerdot$}  (100.0000,1012.3622) .. controls (100.0000,1017.3926) and
(106.3215,1022.7556) ..
  (115.8549,1026.9783)(122.9680,1029.7141) .. controls (148.2131,1038.1193) and (186.9261,1038.5100) ..
  node[above left=0.1cm,pos=0.5] {$1 \mathfrak{l}$}(200.0000,1012.3622);
  \path[draw=black,line join=miter,line cap=butt,line width=0.650pt]
  (40.0000,1012.3622) .. controls (80.0000,972.3622) and (160.0000,972.3622) ..
  node[above=0.1cm,pos=0.495] {$\mathfrak{a}$} (200.0000,1012.3622);
  \path[draw=black,line join=miter,line cap=butt,line width=0.650pt]
  (-29.6956,972.6465) .. controls (-7.5878,972.6465) and (2.5465,981.2164) ..
  node[above right=0.12cm,at start] {\!$\mathfrak{l}$}(10.0677,990.3944)
  (13.1931,994.3876) .. controls (20.2119,1003.6506) and (25.9496,1012.3622) ..
  node[above right=0.055cm,pos=0.35] {$\mathfrak{l} 1$}(40.0000,1012.3622)
  (200.0000,1012.3622) .. controls (186.7856,972.7190) and (147.3782,963.6344) ..
  node[above right=0.07cm,pos=0.43] {$\mathfrak{r} 1$}(122.1590,956.2645)
  (112.8208,953.3027) .. controls (105.0036,950.4894) and (100.0000,947.3640) ..
  (100.0000,942.3622) .. controls (100.0000,927.5293) and (140.0000,927.5293) ..
  node[above=0.1cm] {$\mathfrak{r}^\centerdot$}(140.0000,942.3622) .. controls (140.0000,963.1400) and (50.2642,952.4640) ..
  node[above=0.1cm,pos=0.57] {$\mathfrak{r}$} (0.2778,1002.3622) .. controls (-5.7407,1007.0671) and (-10.8148,1012.6400) ..
  node[above right=0.12cm,at end]{\!\!$\mathfrak{r}$}(-28.0000,1012.6400);
  \path[fill=black] (40,1012.3622) node[circle, draw, line width=0.65pt, minimum width=5mm, fill=white, inner sep=0.25mm]
(text2987) {$\sigma$};
  \path[fill=black] (200,1012.3622) node[circle, draw, line width=0.65pt, minimum width=5mm, fill=white, inner sep=0.25mm]
(text2991) {$\mu$};
\end{tikzpicture}
\ .
\end{equation*}
The axioms for a tricategory also imply that each of $\sigma$ and $\tau$ are also completely determined by $\pi$ and $\mu$.

Here, and elsewhere in this paper, we use string notation to display composite $2$-cells in a bicategory, with objects represented by regions, $1$-cells by strings, and generating $2$-cells by vertices.
We orient our string diagrams with 1-cells proceeding down the page and 2-cells proceeding from left to right.
If a 1-cell $\psi$ belongs to a specified adjoint equivalence, then we will denote its specified adjoint pseudoinverse by $\psi^\centerdot$, and as usual with adjunctions, will draw the unit and counit of the adjoint equivalence in string diagrams as simple caps and cups.
In representing the monoidal structure of a bicategory, we notate the tensor product $\otimes$ by juxtaposition, notate the structural 1-cells $\mathfrak a, \mathfrak{l}, \mathfrak{r}$ and 2-cells $\pi, \mu, \sigma, \tau, \theta$ explicitly, and use string crossings to notate pseudonaturality constraint 2-cells, and also instances of the pseudofunctoriality of $\otimes$ of the form $(f \otimes 1) \circ (1 \otimes g) \cong (1
\otimes g) \circ (f \otimes 1)$ (the interchange isomorphisms).
String splittings and joinings are used to notate pseudofunctoriality of $\otimes$ of the form $f \otimes g \cong (f \otimes 1) \circ (1 \otimes g)$ and $(1 \otimes g) \circ (f \otimes 1) \cong f \otimes g $ respectively.

\subsection{Skew monoidales}

Let $\B$ be a monoidal bicategory.

\begin{definition}
A \emph{skew monoidale} in $\B$ is an object $A \in \B$ together with morphisms $i \colon I \to A$ and $t
\colon A \otimes A \to A$, and (non-invertible) coherence $2$-cells
\begin{equation*}
 \cd{
    (A \otimes A) \otimes A \rtwocell{drr}{\alpha} \ar[rr]^{\mathfrak{a}} \ar[d]_{t \otimes A} & &
    A \otimes (A \otimes A) \ar[d]^{A \otimes t} \\
    A \otimes A \ar[r]_t & A & A \otimes A \ar[l]^t
  }
  \ \  \text{and} \ \
  \cd{
    I \otimes A \ar[d]_{i \otimes A} \rtwocell{dr}{\lambda}
    \ar[r]^-{\mathfrak{l}} &
    A \rtwocell{dr}{\rho} \ar@{=}[d] \ar[r]^-{\mathfrak{r}}
    & A \otimes I \ar[d]^{A \otimes i} \\
    A \otimes A \ar[r]_-t & A & A \otimes A \ar[l]^-t
  }
\end{equation*}
subject to the following five axioms, the appropriate analogues of~\eqref{axiom1}--\eqref{axiom5}.
\begin{align*}
\vcenter{\hbox{% [inline block 0: 20 envs, 40996 chars -> data_tex | \begin{tikzpicture}[y=0.8pt, x=0.8pt,yscale=-1, inner sep=0pt, outer sep=0pt, every text node part/.style={font=\scripts...]

}}
\end{align*}
\end{definition}

Together, skew monoidales, lax monoidal morphisms, and monoidal transformations in $\B$ form a bicategory $
\mathrm{SkMon}(\B)$.
Suppose $(F, \phi_F, \psi_F)\colon~A~\to~B$ and $(G, \phi_G, \psi_G) \colon B \to C$ are lax monoidal morphisms; their composite is $GF$ together with 2-cells
\begin{equation*}
\cd[@!C@C-24pt]{
A \otimes A \ar[rr]^{t} \ar[dr]|{F \otimes F} \ar@/_16pt/[dd]_{GF \otimes GF} && A \ar[dr]^{F} & \\
& B \otimes B \ar[rr]^{t} \ar[dl]|{G \otimes G} \rtwocell{ur}{\phi_F} && B \ar[dl]^{G} \\
C \otimes C \ar[rr]_{t} \rtwocell{urrr}{\phi_G} \congcell{uu} && C &
}
\qquad \text{and} \qquad
\cd[]{
I \ar[r]^{i} \ar@{=}[d] & A \ar[d]^{F} \\
I \ar[r]^{i} \ar@{=}[d] \rtwocell{ur}{\psi_F} & A \ar[d]^{G} \\
I \ar[r]_{i} \rtwocell{ur}{\psi_G} & B
}\ .
\end{equation*}
The unnamed isomorphism arises from the pseudo-functoriality of $\otimes : \B \times \B \to \B$.
The identity morphism on a skew monoidale $A$ is $1_A \colon A \to A$ together with 2-cells
\begin{equation*}
\cd[@C+8pt@R+8pt]{
A \otimes A \ar[r]^{t} \ar@/^10pt/[d]^{1} \ar@/_10pt/[d]_{1 \otimes 1} \congcell{d} \ar@/^4pt/[dr]|{t} & A \ar[d]^{1}
\\
A \otimes A \ar[r]_{t} \congcell[0.4]{ur} \congcell[0.7]{ur} & A
}
\qquad \text{and} \qquad
\cd[@C+8pt@R+8pt]{
I \ar[r]^{i} \ar@{=}[d] \ar[dr]|{i} & A \ar[d]^{1} \\
I \ar[r]_{i}  \congcell[0.7]{ur}  & A
}\ .
\end{equation*}
The unnamed isomorphisms arise from the pseudo-functoriality of $\otimes : \B \times \B \to \B$ and coherence cells in the bicategory.
If $\alpha$ and $\beta$ are composable transformations, their composite is $\beta \alpha$.
The identity transformation on a lax monoidal morphism $F$ is $1_F$.
Coherence 2-cells for $\mathrm{SkMon}(\B)$ are inherited from $\B$.
That is, if $F,G,H$ are composable lax monoidal morphisms then the coherence isomorphisms $(HG)F \cong H(GF)$, $1F \cong F$ and $F \cong F1$ in $\B$ are already monoidal transformations.

\subsection{Nerves of monoidal bicategories}\label{subsec:nerves}

As noted above, a monoidal bicategory is a one-object tricategory in the sense of~\cite{GPS}.
There are several known constructions of nerves for tricategories; the one of interest to us is essentially Street's $\omega$-categorical nerve~\cite{aos}, restricted from dimension $\omega$ to dimension $3$, and generalised from strict to weak $3$-categories.
An explicit description of this nerve is given in~\cite{GRoT}; we now reproduce the details for the case of a monoidal bicategory $\B$.

\begin{definition}
The \emph{nerve of $\B$}, $\N\B$, is the simplicial set with:
\begin{itemize}
\item A unique $0$-simplex $\star$.
\item A $1$-simplex is an object $A_{01}$ of $\B$; its two faces are necessarily $\star$.
\item A $2$-simplex is given by objects $A_{12}, A_{02}, A_{01}$ of $\B$ together with a $1$-cell $${A}_{012} \colon {A}_{12} \otimes {A}_{01} \to {A}_{02}\ ;$$ its three faces are $A_{12}$, $A_{02}$, and $A_{01}$.
\item A $3$-simplex is given by:
\begin{itemize}
\item Objects ${A}_{ij}$ for each $0 \leqslant i < j
  \leqslant 3$;
\item $1$-cells ${A}_{ijk} \colon {A}_{jk} \otimes {A}_{ij} \to {A}_{ik}$ for
  each $0 \leqslant i < j < k \leqslant 3$;
\item A $2$-cell
  \begin{equation*}
 \cd{
    ({A}_{23} \otimes {A}_{12}) \otimes {A}_{01} \rtwocell{drr}{{A}_{0123}} \ar[rr]^{\mathfrak{a}} \ar[d]_{{A}_{123} \otimes 1}
& &
    {A}_{23} \otimes ({A}_{12} \otimes {A}_{01}) \ar[d]^{1 \otimes {A}_{012}} \\
    {A}_{13} \otimes {A}_{01} \ar[r]_{{A}_{013}} & {A}_{03} & {A}_{23}
    \otimes {A}_{02}\rlap{ ;} \ar[l]^{{A}_{023}}
  }
  \end{equation*}
\end{itemize}
its four faces are $A_{123}$, $A_{023}$, $A_{013}$ and $A_{012}$.
\item A $4$-simplex is given by:
\begin{itemize}
\item Objects ${A}_{ij}$ for each $0 \leqslant i < j
  \leqslant 4$;
\item  $1$-cells ${A}_{ijk} \colon {A}_{jk} \otimes {A}_{ij} \to {A}_{ik}$ for
  each $0 \leqslant i < j < k \leqslant 4$;
\item $2$-cells ${A}_{ijk\ell} \colon {A}_{ij\ell} \circ ({A}_{jk\ell}
  \otimes 1) \Rightarrow {A}_{ik\ell} \circ (1 \otimes {A}_{ijk}) \circ
  \mathfrak{a}$ for each $0 \leqslant i < j < k < \ell \leqslant 4$
\end{itemize}
such that the $2$-cell equality
\begin{gather*}\label{NB4simplex}
\vcenter{\hbox{\begin{tikzpicture}[y=1pt, x=1.1pt,yscale=-1, inner sep=0pt, outer sep=0pt, every text node part/.style={font=
\scriptsize} ]
  \path[draw=black,line join=miter,line cap=butt,line width=0.650pt]
    (0.5761,996.5573) .. controls (25.8299,996.0522) and (35.7165,994.4461) ..
    node[above right=0.15cm,at start] {$\!\!({A}_{234} 1) 1$}(48.0259,1000.9550);
  \path[draw=black,line join=miter,line cap=butt,line width=0.650pt]
    (0.5761,1020.4270) .. controls (32.7994,1020.6795) and (38.4294,1009.3612) ..
    node[above right=0.17cm,at start] {$\!\!{A}_{124} 1$}(48.0259,1003.0477);
  \path[draw=black,line join=miter,line cap=butt,line width=0.650pt]
    (49.8969,1000.5044) .. controls (58.2307,980.6554) and (151.0215,979.4784) ..
    node[above right=0.25cm,pos=0.4] {$\mathfrak{a} 1$}(166.1738,987.8122);
  \path[draw=black,line join=miter,line cap=butt,line width=0.650pt]
    (49.6135,1003.1578) .. controls (60.9777,1006.6933) and (72.9109,1017.9473) ..
    node[below left=0.05cm,pos=0.57]{${A}_{134} 1$} (80.4466,1027.1123);
  \path[draw=black,line join=miter,line cap=butt,line width=0.650pt]
    (0.5761,1042.0698) .. controls (14.4657,1042.3223) and (75.2981,1040.0698) ..
    node[above right=0.15cm,at start] {$\!\!{A}_{014}$} (80.8539,1029.7127);
  \path[draw=black,line join=miter,line cap=butt,line width=0.650pt]
    (82.2825,1030.8845) .. controls (90.1821,1054.0933) and (173.6677,1051.1703)
    .. node[above left=0.12cm,at end] {${A}_{034}$}(214.7867,1051.3122);
  \path[draw=black,line join=miter,line cap=butt,line width=0.650pt]
    (141.5590,1019.2256) .. controls (143.0743,1011.6494) and (166.0258,992.5776)
    .. node[below right=0.04cm] {$1 \mathfrak{a}$}(166.0258,992.5776);
  \path[draw=black,line join=miter,line cap=butt,line width=0.650pt]
    (141.6203,1021.3685) .. controls (141.6203,1021.3685) and (163.4571,1014.0062)
    .. node[above left=0.12cm,at end] {$1 (1 {A}_{012})\!\!$}(214.7868,1014.7638);
  \path[draw=black,line join=miter,line cap=butt,line width=0.650pt]
    (141.4111,1023.0061) .. controls (141.4111,1023.0061) and (142.5493,1033.5038)
    .. node[above left=0.12cm,at end] {$1 {A}_{023}$}(214.7868,1033.7564);
  \path[draw=black,line join=miter,line cap=butt,line width=0.650pt]
    (167.5590,988.4831) .. controls (167.5590,988.4831) and (177.6142,980.4754) ..
    node[above left=0.12cm,at end] {$\mathfrak{a}$} (214.7868,979.9703);
  \path[draw=black,line join=miter,line cap=butt,line width=0.650pt]
    (167.9162,991.3403) .. controls (172.8729,995.3375) and (185.9479,996.5546) ..
    node[above left=0.12cm,at end] {$\mathfrak{a}$}(214.7867,996.5546);
  \path[draw=black,line join=miter,line cap=butt,line width=0.650pt]
    (82.5536,1028.3546) .. controls (96.1906,1037.3648) and (132.5520,1035.8496)
    .. node[below right=0.12cm,pos=0.42] {$1 {A}_{013}$} (139.8756,1022.0412);
  \path[draw=black,line join=miter,line cap=butt,line width=0.650pt]
    (82.7084,1025.8212) .. controls (84.1609,1008.5230) and (138.1030,990.5459) ..
    node[above left=0.07cm,pos=0.6] {$\mathfrak{a}$}(166.0786,990.2620)
    (49.8145,1001.8228) .. controls (60.2802,1001.0663) and (79.7259,1003.0374) ..
    node[above right=0.08cm,pos=0.4,rotate=-9] {$(1 {A}_{123}) 1$}(98.0839,1006.4270)
    (105.9744,1007.9878) .. controls (121.2679,1011.2244) and (134.6866,1015.3648) ..
    node[below left=0.08cm,pos=0.7,rotate=-15] {$1 ({A}_{123} 1)$}(139.8779,1019.5827);
  \path[fill=black] (166.83624,990.0094) node[circle, draw, line width=0.65pt,
    minimum width=5mm, fill=white, inner sep=0.25mm] (text3313) {$\ \
      \pi\ \ $    };
  \path[fill=black] (48.501251,1001.512) node[circle, draw, line width=0.65pt,
    minimum width=5mm, fill=white, inner sep=0.25mm] (text3305) {${A}_{1234} 1$    };
  \path[fill=black] (81.600487,1027.7435) node[circle, draw, line width=0.65pt,
    minimum width=5mm, fill=white, inner sep=0.25mm] (text3309) {${A}_{0134}$    };
  \path[fill=black] (145.38065,1021.0309) node[circle, draw, line width=0.65pt,
    minimum width=5mm, fill=white, inner sep=0.25mm] (text3317) {$1 {A}_{0123}$  };
\end{tikzpicture}}} \\ = \\
\vcenter{\hbox{\begin{tikzpicture}[y=1pt, x=1pt,yscale=-1, inner sep=0pt, outer sep=0pt, every text node part/.style={font=
\scriptsize} ]
  \path[draw=black,line join=miter,line cap=butt,line width=0.650pt]
    (0.5761,1010.9221) .. controls (22.7994,1011.1746) and (33.5330,1012.5294) ..
    node[above right=0.12cm,at start] {\!${A}_{124} 1$}(49.1780,1027.2420);
  \path[draw=black,line join=miter,line cap=butt,line width=0.650pt]
    (0.5761,1042.0698) .. controls (14.4657,1042.3223) and (43.6151,1040.3578) ..
    node[above right=0.12cm,at start] {\!${A}_{014}$}(49.1709,1030.0007);
  \path[draw=black,line join=miter,line cap=butt,line width=0.650pt]
    (51.1755,1030.5965) .. controls (67.4280,1056.6856) and (126.7968,1041.6654)
    .. node[below right=0.14cm] {${A}_{024}$} (139.8511,1033.1665);
  \path[draw=black,line join=miter,line cap=butt,line width=0.650pt]
    (142.8500,1030.5854) .. controls (142.8500,1030.5854) and (184.5612,1028.3914)
    .. node[above left=0.12cm,at end] {$1 {A}_{023}$}(225.8909,1029.1490);
  \path[draw=black,line join=miter,line cap=butt,line width=0.650pt]
    (142.6407,1033.9511) .. controls (142.6407,1033.9511) and (172.9964,1047.0248)
    .. node[above left=0.12cm,at end] {${A}_{034}$}(225.8909,1046.9894);
  \path[draw=black,line join=miter,line cap=butt,line width=0.650pt]
    (51.8895,1026.1092) .. controls (64.3077,997.3630) and (142.1189,975.3128) ..
    node[above left=0.12cm,at end] {$\mathfrak{a}$}(225.8909,975.6050)
    (52.8868,1028.6427) .. controls (64.4224,1023.3508) and (87.4860,1019.3533) ..
    node[below right=0.1cm,pos=0.53] {$1 {A}_{012}$} (113.0660,1016.4776)
    (121.4833,1015.5795) .. controls (135.8648,1014.1237) and (150.7159,1013.0048) ..
    node[above=0.1cm] {$1 {A}_{012}$} (164.5150,1012.1936)
    (173.4370,1011.7035) .. controls (191.3809,1010.7870) and (206.7880,1010.4220) ..
    node[above left=0.12cm,at end] {$1 (1 {A}_{012})\!\!$} (225.8909,1010.5363)
    (143.3647,1028.4425) .. controls (173.9707,1013.3776) and (183.0558,992.0340) ..
    node[above left=0.12cm,at end] {$\mathfrak{a}$}(225.8909,992.6100)
    (1.1521,981.0038) .. controls (30.9106,980.4472) and (58.1432,987.9204) ..
    node[above right=0.12cm,at start] {$\!\!({A}_{234} 1) 1$}(81.9942,998.1170)
    (88.1242,1000.8286) .. controls (108.0234,1009.9228) and (125.3927,1020.6737) ..
    node[above right=0.1cm,pos=0.22] {${A}_{234} 1$}
    (139.6961,1029.7577);
  \path[fill=black] (49.917496,1028.8956) node[circle, draw, line width=0.65pt,
    minimum width=5mm, fill=white, inner sep=0.25mm] (text3309) {${A}_{0124}$    };
  \path[fill=black] (139.03424,1031.976) node[circle, draw, line width=0.65pt,
    minimum width=5mm, fill=white, inner sep=0.25mm] (text3317) {${A}_{0234}$  };
\end{tikzpicture}}}
\end{gather*}
holds.
The five faces of this simplex are $A_{1234}$, $A_{0234}$,
$A_{0134}$, $A_{0124}$ and $A_{0123}$.
\item Higher-dimensional simplices are determined by the requirement
  that $\N\B$ be $4$-coskeletal.
\end{itemize}
It remains to describe the degeneracy operators.
The degeneracy of the unique $0$-simplex is the unit object $I \in \B$; the two degeneracies $s_0(A), s_1(A)$ of a $1$-simplex $A \in \B$ are the unit constraints $\mathfrak r^\centerdot \colon A \otimes I \to A$ and $\mathfrak{l} \colon I
\otimes A \to A$; the three degeneracies $s_0(\gamma), s_1(\gamma)$ and $s_2(\gamma)$ of a $2$-simplex $\gamma \colon B \otimes C \to A$ are the respective $2$-cells
\begin{equation*}
\vcenter{\hbox{\begin{tikzpicture}[y=0.8pt, x=0.8pt,yscale=-1, inner sep=0pt, outer sep=0pt, every text node
part/.style={font=\scriptsize} ]
  \path[draw=black,line join=miter,line cap=butt,line width=0.650pt]
  (0.0000,902.3622) .. controls (22.2057,902.3622) and (24.4296,862.1105) ..
  node[above right=0.12cm,at start] {$\!\mathfrak{r}^\centerdot$}(54.6378,854.8261) .. controls (70.4902,851.0034) and
(80.9687,860.6367) .. (63.1815,866.8469)
  (0.0000,882.3622) .. controls (6.5599,882.4187) and (12.3710,883.9537) ..
  node[above right=0.12cm,at start] {$\!\gamma 1$}(18.0721,886.3023)(22.7007,888.3801) .. controls (40.8765,897.1386) and
(59.6979,912.3622) ..
  node[above left=0.12cm,at end] {$\gamma$\!}(100.0000,912.3622);
  \path[draw=black,line join=miter,line cap=butt,line width=0.650pt] (59.1409,868.1591) .. controls (29.0888,882.3013) and
(70.2005,892.3622) ..
  node[above left=0.12cm,at end] {$1 \mathfrak{r}^\centerdot$\!}(100.0000,892.3622);
  \path[draw=black,line join=miter,line cap=butt,line width=0.650pt] (62.6764,869.3723) .. controls (70.2525,874.1706) and
(91.3989,872.3622) ..
  node[above left=0.12cm,at end] {$\mathfrak{a}$\!}(100.0000,872.3622);
  \path[fill=black] (60.710678,868.80756) node[circle, draw, line width=0.65pt, minimum width=5mm, fill=white, inner
sep=0.25mm] (text14966) {$\tau$   };
 \end{tikzpicture}}}
\qquad
\vcenter{\hbox{\begin{tikzpicture}[y=0.8pt, x=0.8pt,yscale=-1, inner sep=0pt, outer sep=0pt, every text node
part/.style={font=\scriptsize} ]
  \path[draw=black,line join=miter,line cap=butt,line width=0.650pt] (0.0000,892.3622) .. controls (36.8530,892.1096) and
(33.3645,857.8413) ..
  node[above right=0.12cm,at start] {\!$\mathfrak{r}^\centerdot 1$}(56.4056,853.3108) .. controls (75.1348,849.6282) and
(85.9633,861.3740) .. (63.1815,866.8469);
  \path[draw=black,line join=miter,line cap=butt,line width=0.650pt] (0.0000,912.3622) .. controls (29.3000,912.6147) and
(43.6628,912.3622) ..
  node[above left=0.12cm,at end] {$\gamma$\!}node[above right=0.12cm,at start] {\!$\gamma$}(100.0000,912.3622);
  \path[draw=black,line join=miter,line cap=butt,line width=0.650pt] (62.5254,872.1096) .. controls (63.3845,886.3127) and
(70.2005,892.3622) ..
  node[above left=0.12cm,at end] {$1 \mathfrak{l}$\!}(100.0000,892.3622);
  \path[draw=black,line join=miter,line cap=butt,line width=0.650pt] (62.6764,869.3723) .. controls (70.2525,874.1706) and
(91.3989,872.3622) ..
  node[above left=0.12cm,at end] {$\mathfrak{a}$\!}(100.0000,872.3622);
  \path[fill=black] (60.963215,870.82788) node[circle, draw, line width=0.65pt, minimum width=5mm, fill=white, inner
sep=0.25mm] (text14966) {$\mu^{\!\scriptscriptstyle -1}$
   };
 \end{tikzpicture}}}\qquad\text{and}\qquad
\vcenter{\hbox{\begin{tikzpicture}[y=0.8pt, x=0.8pt,yscale=-1, inner sep=0pt, outer sep=0pt, every text node
part/.style={font=\scriptsize} ]
  \path[draw=black,line join=miter,line cap=butt,line width=0.650pt]
  (43.0799,884.7365) .. controls (43.9391,898.9396) and (70.2005,912.3622) ..
  node[above left=0.12cm,at end] {$\mathfrak{l}$\!}(100.0000,912.3622)
  (0.0000,902.3622) .. controls (26.8739,902.5938) and (38.5571,900.6887) ..
  node[above right=0.12cm,at start] {\!$\gamma$}(48.0003,898.4810)
  (54.0398,897.0009) .. controls (63.6207,894.6257) and (73.8763,892.3622) ..
  node[above left=0.12cm,at end] {$1 \gamma$\!}(100.0000,892.3622);
  \path[draw=black,line join=miter,line cap=butt,line width=0.650pt] (43.2310,880.4840) .. controls (49.3612,869.8661) and
(91.3989,872.3622) ..
  node[above left=0.12cm,at end] {$\mathfrak{a}$\!}(100.0000,872.3622);
  \path[draw=black,line join=miter,line cap=butt,line width=0.650pt] (0.0000,882.3622) --
  node[above right=0.12cm,at start] {\!$\mathfrak{l} 1$}(40.0000,882.3622);
  \path[fill=black] (41.767765,882.36218) node[circle, draw, line width=0.65pt, minimum width=5mm, fill=white, inner
sep=0.25mm] (text14966) {$\sigma$
     };
 \end{tikzpicture}}}\rlap{ .}
\end{equation*}
The four degeneracies of a $3$-simplex are simply the assertions of certain $2$-cell equalities; that these hold
is a consequence of the axioms for a monoidal bicategory.
Higher degeneracies are determined by coskeletality.

All simplicial identities except $s_0(I) = s_1(I)$ (i.e.\ $r^\centerdot_I = \ell_I$) hold automatically.
There is however a canonical isomorphism $r^\centerdot_I \cong \ell_I$, see~\cite{Gurski} A.3.1.
Thus $\ell_I$ is a pseudo-inverse for $r_I$ and we can suppose that $r^\centerdot_I = \ell_I$ without any loss of generality.
\end{definition}

\begin{definition}
The \emph{pseudo nerve of $\B$}, called $\N_{\mathrm{p}}\B$, is the same as $\N\B$ with the extra requirement that 3-simplex components $A_{0123}$ be invertible.
\end{definition}

\begin{remark} \label{nerve_right_adjoint}
The assignation $\B \mapsto \N(\B)$ sending a monoidal bicategory to its nerve can be extended to a functor $\N \colon \mathrm{MonBicat}_s \to \mathrm{SSet}$, where $\mathrm{MonBicat}_s$ is the category of monoidal bicategories and morphisms which strictly preserve all the structure.
When seen in this way, the nerve is a right adjoint.
This holds equally well for $\N_{\mathrm{p}}$.
\end{remark}

Now that the nerve is well defined, we can properly examine simplicial maps $F \colon \C \to \N\B$. 
In the lowest few dimensions the data for such an $F$ consist of the following.
\begin{itemize}
\item A single object $Fc$ in $\B$.
\item Two 1-cells in $\B$
\begin{equation*}
\cd[]{Fc\otimes Fc \ar[r]^-{Ft} & Fc}\quad\text{and}\quad\cd[]{I\otimes I \ar[r]^-{Fi} & Fc}
\end{equation*}
since $F(s_0(\star)) = I$.
\end{itemize}
Before we can examine the higher data we already notice a problem: while $Ft$ has the right form to provide a multiplication $Fc \otimes Fc \to Fc$, the map $Fi \colon I \otimes I \to Fc$ has the wrong domain to be a unit map for $Fc$.
While this problem is easily resolved using the canonical equivalence of $I \otimes I$ with $I$, the fact that $I \otimes I$ and $I$ are only \emph{equivalent} and not \emph{isomorphic} means that the correspondence we're investigating cannot be a literal bijection between $\mathrm{sSet}(\C,\N\B)$ and the set of skew monoidales in $\B$.
It will, however, be surjective up to equivalence when $\mathrm{sSet}(\C,\N\B)$ is regarded as a \emph{bicategory}.

\subsection{The bicategory $\mathrm{sSet}(\C,\BN\B)$}

In order to construct the bicategory $\mathrm{sSet}(\C, \BN\B)$, we will first show that $\N\B$ underlies a simplicial bicategory (a bicategory object internal to simplicial sets). 
Then since the representable $\mathrm{sSet}(\C, \mhyphen) \colon \mathrm{sSet} \to \mathrm{Set}$ preserves limits, $\mathrm{sSet}(\C,\N\B)$ becomes the set of objects of a bicategory.

\begin{observation} \label{obs:simpl_bicat}
The nerve of a monoidal bicategory $\N\B$ is the object of objects of a bicategory internal to $\mathrm{sSet}$
\begin{equation} \label{nerve_internal_bicat}
 \cd[]{
  \N(\B \Downarrow \B) \ar@<-.5ex>[r] \ar@<.5ex>[r] &
  \N(\B \downarrow \B) \ar@<-.5ex>[r] \ar@<.5ex>[r] &
  \N(\B) }
 \end{equation}
where $(\B \downarrow \B)$ and $(\B \Downarrow \B)$ are monoidal bicategories defined below.
We call this internal bicategory $\BN\B$; see Table~\ref{table1} for an explicit description of 0, 1 and 2-cells in the lowest few dimensions.
We construct it by first building a bicategory
\begin{equation*}
 \cd[]{
  (\B \Downarrow \B) \ar@<-.5ex>[r] \ar@<.5ex>[r] &
  (\B \downarrow \B) \ar@<-.5ex>[r] \ar@<.5ex>[r] &
  (\B) }
 \end{equation*}
internal to $\mathrm{MonBicat}_s$ and then use the fact that $\N$ preserves limits (because it is a right adjoint, Remark~\ref{nerve_right_adjoint}).

The \emph{oplax-comma monoidal bicategory} $(\B \downarrow \B)$ is defined as follows.
Its objects are arrows $h \colon A \to B$.
A morphism from $h$ to $h' \colon A' \to B'$ is a triple $(f_A, f_B, f_{h})$, where
$f_A \colon A \to A'$,
$f_B \colon B \to B'$, and where
\begin{equation}\label{comma_one_cell}
\cd[]{
A \ar[d]_{h} \ar[r]^{f_A} & A' \ar[d]^{h'} \\
B \ar[r]_{f_B} \rtwocell{ur}{f_{h}} & B'
}
\end{equation}
A 2-cell from $(f_A, f_B, f_{h})$ to $(g_A, g_B, g_{h})$ is a pair $(\alpha_{A},\alpha_{B})$, where $\alpha_{A}\colon f_A\Rightarrow g_A$ and $\alpha_B\colon f_B \Rightarrow g_B$ satisfy
\begin{equation}\label{comma_two_cell}
\vcenter{\hbox{
\begin{tikzpicture}[y=0.60pt, x=0.6pt, yscale=-1, xscale=-1, inner sep=0pt, outer sep=0pt, every text node part/.style={font=
\scriptsize} ]
\path[draw=black,line join=miter,line cap=butt,line width=0.650pt]
  (210.0000,902.3622) .. controls (250.0000,902.3622) and (300.0000,872.3622) ..
  node[above left=0.12cm, at start] {$h'$} (300.0000,872.3622);
\path[draw=black,line join=miter,line cap=butt,line width=0.650pt]
  (210.0000,842.3622) .. controls (250.0000,842.3622) and (300.0000,872.3622) ..
  node[above left=0.12cm, at start] {$g_A$} (300.0000,872.3622);
\path[draw=black,line join=miter,line cap=butt,line width=0.650pt]
  (420.0000,902.3622) .. controls (390.0000,902.3622) and (360.0000,892.3622) ..
  node[above right=0.12cm, at start] {$f_B$} (360.0000,892.3622);
\path[draw=black,line join=miter,line cap=butt,line width=0.650pt]
  (301.7391,871.0578) .. controls (321.9565,852.1448) and (390.0000,841.4926) ..
  node[above right=0.12cm, at end] {$h$} (420.0000,842.5796);
\path[draw=black,line join=miter,line cap=butt,line width=0.650pt]
  (301.5217,873.2318) .. controls (314.1304,883.0144) and (358.2609,892.1448) ..
  node[above=0.12cm, pos=0.5] {$g_B$} (358.2609,892.1448);
\path[fill=black] (360,892.36218) node[circle, draw, line width=0.65pt, minimum
  width=5mm, fill=white, inner sep=0.25mm] (text3097) {$\alpha_B$    };
\path[fill=black] (300,872.36218) node[circle, draw, line width=0.65pt, minimum
  width=5mm, fill=white, inner sep=0.25mm] (text3101) {$g_h$    };
\end{tikzpicture}
}}
\quad \ = \ \quad
\vcenter{\hbox{
\begin{tikzpicture}[y=0.60pt, x=0.6pt,yscale=-1, xscale=-1, inner sep=0pt, outer sep=0pt, every text node part/.style={font=
\scriptsize} ]
\path[draw=black,line join=miter,line cap=butt,line width=0.650pt]
  (341.7391,973.6665) .. controls (353.4783,981.4926) and (385.6522,1002.1448)
  .. node[above right=0.12cm, at end] {$f_B$} (419.5652,1002.3622);
\path[draw=black,line join=miter,line cap=butt,line width=0.650pt]
  (338.9130,973.4491) .. controls (323.6957,987.5796) and (257.1739,1002.3622)
  .. node[above left=0.12cm, at end] {$h'$} (210.4348,1001.9274);
\path[draw=black,line join=miter,line cap=butt,line width=0.650pt]
  (342.1739,971.4926) .. controls (349.7826,964.3187) and (385.0000,943.0144) ..
  node[above right=0.12cm, at end] {$h$} (419.7826,942.5796);
\path[draw=black,line join=miter,line cap=butt,line width=0.650pt]
  (338.6957,970.4057) .. controls (325.0000,961.0578) and (290.8696,957.7970) ..
  node[above=0.12cm, pos=0.5] {$f_A$} (272.3913,953.2318);
\path[draw=black,line join=miter,line cap=butt,line width=0.650pt]
  (267.8261,951.2752) .. controls (267.8261,951.2752) and (240.2174,943.4491) ..
  node[above left=0.12cm, at end] {$g_A$} (210.4348,942.5796);
\path[fill=black] (340,972.36218) node[circle, draw, line width=0.65pt, minimum
  width=5mm, fill=white, inner sep=0.25mm] (text3240-1-9-2) {$f_h$    };
\path[fill=black] (270,952.36218) node[circle, draw, line width=0.65pt, minimum
  width=5mm, fill=white, inner sep=0.25mm] (text3057) {$\alpha_A$    };
\end{tikzpicture}
}}\ .
\end{equation}
Composition and identities are defined in the obvious way.
The tensor for the monoidal structure is defined on 0 and 2-cells by tensoring the underlying data in $\B$.
On 1-cells, we need $(f_A, f_B, f_{h}) \otimes (p_A, p_B, p_{h}) = (f_A\otimes p_A, f_B\otimes p_B, \varphi_0 \circ (f_{h}\otimes p_{h}) \circ {\varphi_1})$ where $\varphi_0$ and $\varphi_1$ are appropriate coherence maps associated to $\otimes \colon \B\times\B \to \B$.

The monoidal bicategory $(\B \Downarrow \B)$ is defined as follows.
Its objects are 2-cells $\sigma \colon h \Rightarrow k \colon A \to B$ in $\B$.
A morphism from $\sigma$ to $\sigma' \colon h' \Rightarrow k' \colon A' \to B'$ is a 4-tuple $(f_A, f_B, f_h, f_k)$ where $(f_A, f_B, f_h)$ and $(f_A, f_B, f_k)$ take the same form as~\eqref{comma_one_cell} and satisfy
\begin{equation*}
\vcenter{\hbox{
\begin{tikzpicture}[y=0.60pt, x=0.6pt,xscale=-1, inner sep=0pt, outer sep=0pt, every text node part/.style={font=
\scriptsize} ]
\path[draw=black,line join=miter,line cap=butt,line width=0.650pt]
  (210.0000,902.3622) .. controls (250.0000,902.3622) and (300.0000,872.3622) ..
  node[above left=0.12cm, at start] {$f_A$} (300.0000,872.3622);
\path[draw=black,line join=miter,line cap=butt,line width=0.650pt]
  (210.0000,842.3622) .. controls (250.0000,842.3622) and (300.0000,872.3622) ..
  node[above left=0.12cm, at start] {$k'$} (300.0000,872.3622);
\path[draw=black,line join=miter,line cap=butt,line width=0.650pt]
  (420.0000,902.3622) .. controls (390.0000,902.3622) and (360.0000,892.3622) ..
  node[above right=0.12cm, at start] {$h$} (360.0000,892.3622);
\path[draw=black,line join=miter,line cap=butt,line width=0.650pt]
  (301.7391,871.0578) .. controls (321.9565,852.1448) and (390.0000,841.4926) ..
  node[above right=0.12cm, at end] {$f_B$} (420.0000,842.5796);
\path[draw=black,line join=miter,line cap=butt,line width=0.650pt]
  (301.5217,873.2318) .. controls (314.1304,883.0144) and (358.2609,892.1448) ..
  node[above=0.12cm, pos=0.5] {$k$} (358.2609,892.1448);
\path[fill=black] (360,892.36218) node[circle, draw, line width=0.65pt, minimum
  width=5mm, fill=white, inner sep=0.25mm] (text3097) {$\sigma$    };
\path[fill=black] (300,872.36218) node[circle, draw, line width=0.65pt, minimum
  width=5mm, fill=white, inner sep=0.25mm] (text3101) {$f_k$    };
\end{tikzpicture}
}}
\quad \ = \ \quad
\vcenter{\hbox{
\begin{tikzpicture}[y=0.60pt, x=0.6pt,xscale=-1, inner sep=0pt, outer sep=0pt, every text node part/.style={font=
\scriptsize} ]
\path[draw=black,line join=miter,line cap=butt,line width=0.650pt]
  (341.7391,973.6665) .. controls (353.4783,981.4926) and (385.6522,1002.1448)
  .. node[above right=0.12cm, at end] {$h$} (419.5652,1002.3622);
\path[draw=black,line join=miter,line cap=butt,line width=0.650pt]
  (338.9130,973.4491) .. controls (323.6957,987.5796) and (257.1739,1002.3622)
  .. node[above left=0.12cm, at end] {$f_A$} (210.4348,1001.9274);
\path[draw=black,line join=miter,line cap=butt,line width=0.650pt]
  (342.1739,971.4926) .. controls (349.7826,964.3187) and (385.0000,943.0144) ..
  node[above right=0.12cm, at end] {$f_B$} (419.7826,942.5796);
\path[draw=black,line join=miter,line cap=butt,line width=0.650pt]
  (338.6957,970.4057) .. controls (325.0000,961.0578) and (290.8696,957.7970) ..
  node[above=0.12cm, pos=0.5] {$h'$} (272.3913,953.2318);
\path[draw=black,line join=miter,line cap=butt,line width=0.650pt]
  (267.8261,951.2752) .. controls (267.8261,951.2752) and (240.2174,943.4491) ..
  node[above left=0.12cm, at end] {$k'$} (210.4348,942.5796);
\path[fill=black] (340,972.36218) node[circle, draw, line width=0.65pt, minimum
  width=5mm, fill=white, inner sep=0.25mm] (text3240-1-9-2) {$f_h$    };
\path[fill=black] (270,952.36218) node[circle, draw, line width=0.65pt, minimum
  width=5mm, fill=white, inner sep=0.25mm] (text3057) {$\sigma'$    };
\end{tikzpicture}
}}\ .
\end{equation*}
A 2-cell $(f_A, f_B, f_h, f_k) \Rightarrow (g_A, g_B, g_h, g_k)$ is a pair $(\alpha_A, \alpha_B)$,
$\alpha_A \colon f_A \Rightarrow g_A$, $\alpha_B \colon f_B \Rightarrow g_B$ satisfying~\eqref{comma_two_cell} for both $h$ and $k$.
Composition and identities are defined in the obvious way.
Again, the tensor for the monoidal structure is defined on 0 and 2-cells by tensoring the underlying data in $\B$.
On 1-cells, we need $(f_A, f_B, f_h, f_k) \otimes (p_A, p_B, p_h, p_k) = (f_A\otimes p_A, f_B\otimes p_B, \varphi_0 \circ (f_{h}\otimes p_{h}) \circ {\varphi_1}, \varphi_2 \circ (f_{h}\otimes p_{h}) \circ {\varphi_3})$ where each $\varphi_i$ is an appropriate coherence map associated to $\otimes \colon \B\times\B \to \B$.

The internal bicategory structure
\begin{equation*}
 \cd[]{
  (\B \Downarrow \B) \ar@<-.5ex>[r] \ar@<.5ex>[r] &
  (\B \downarrow \B) \ar@<-.5ex>[r] \ar@<.5ex>[r] &
  (\B) }
 \end{equation*}
is given by first defining composition of 1-cells~\eqref{comma_one_cell} to be `down-the-page'.
Domain maps, codomain maps, identities and 2-cell composition follow easily from there.
\end{observation}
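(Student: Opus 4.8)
The plan is to exhibit the diagram of the observation as the image under the nerve functor of a bicategory object in $\mathrm{MonBicat}_s$, and thereby reduce everything to facts about $\B$ itself. The category $\mathrm{MonBicat}_s$ of monoidal bicategories and strict homomorphisms is complete (it is, for instance, the category of models of a finite-limit sketch), so the notion of bicategory object in it makes sense; the first task is to build one, with object of objects $\B$, object of $1$-cells $(\B \downarrow \B)$, and object of $2$-cells $(\B \Downarrow \B)$. Applying $\N \colon \mathrm{MonBicat}_s \to \mathrm{SSet}$, which by Remark~\ref{nerve_right_adjoint} is a right adjoint and so preserves all limits --- in particular the pullbacks $(\B \downarrow \B) \times_{\B} (\B \downarrow \B)$ and $(\B \Downarrow \B) \times_{(\B \downarrow \B)} (\B \Downarrow \B)$ that encode composition and the coherence constraints --- then produces a bicategory object in $\mathrm{SSet}$, namely $\BN\B$. (Since $\mathrm{sSet}(\C,-) \colon \mathrm{SSet} \to \mathrm{Set}$ likewise preserves limits, applying it to $\BN\B$ afterwards yields a bicategory object in $\mathrm{Set}$, i.e.\ an ordinary bicategory, which is $\mathrm{sSet}(\C,\BN\B)$.)

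The substance is the construction in $\mathrm{MonBicat}_s$. First I would check that $(\B \downarrow \B)$ and $(\B \Downarrow \B)$, with the tensors described above, really are monoidal bicategories: their entire structure --- the underlying bicategory operations, the associativity and unit equivalences $\mathfrak{a}, \mathfrak{l}, \mathfrak{r}$, the invertible modifications $\pi, \mu, \sigma, \tau$ and the $2$-cell $\theta$ --- is obtained componentwise from that of $\B$, using the pseudofunctoriality coherence $2$-cells of $\otimes \colon \B \times \B \to \B$ to combine the component data, with monoidal units $1_I$ and $1_{1_I}$; each tricategory axiom then reduces to the corresponding axiom of $\B$, verified component by component. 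Next I would supply the internal-bicategory structure maps: source and target are the evident face assignments (domain and codomain of an arrow, and $(f_A, f_B, f_h, f_k) \mapsto (f_A, f_B, f_h)$, $(f_A, f_B, f_k)$); the identity-assigning maps are $A \mapsto 1_A$ and $h \mapsto 1_h$; horizontal composition of $1$-cells is ``down-the-page'' pasting of the squares~\eqref{comma_one_cell}, and vertical composition of $2$-cells is inherited from $\B$; and the associator and unitor maps send a composable string of $1$-cells to the relevant structural $2$-cell of $\B$. Each of these is a strict monoidal homomorphism, precisely because the tensors on $(\B \downarrow \B)$ and $(\B \Downarrow \B)$ act on objects and on $2$-cells by the componentwise tensor of $\B$; and the internal pentagon and triangle identities are, componentwise, exactly the two coherence axioms of the bicategory $\B$. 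The single point needing care is that $A \mapsto 1_A$ is a strict monoidal homomorphism only when $1_{A \otimes B} = 1_A \otimes 1_B$, i.e.\ when $\otimes$ is normal; this may be assumed without loss of generality by coherence for monoidal bicategories, and in any case affects $\BN\B$ only up to the biequivalence~\eqref{AAA}.

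The main obstacle is sheer bulk, not conceptual depth: verifying the full list of tricategory axioms for the two comma-type monoidal bicategories, and the strictness of every structure homomorphism, is a long but entirely mechanical calculation, each line of which is pinned to a corresponding identity in $\B$.
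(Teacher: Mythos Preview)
Your proposal is correct and follows essentially the same approach as the paper: construct the bicategory object in $\mathrm{MonBicat}_s$ with components $\B$, $(\B\downarrow\B)$, $(\B\Downarrow\B)$, then transport it along the limit-preserving nerve functor. You have in fact been more careful than the paper on one point---the normality issue for the identity-assigning map $A\mapsto 1_A$---which the paper leaves implicit.
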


\begin{sidewaystable}
\centering
\begin{tabular}{| c || c | c | c |}
\hline
  dim. & $\N(\B)$ & $\N(\B \downarrow \B)$ & $\N(\B \Downarrow \B)$ \\
\hline
  0 & $\star$ & $\star$ & $\star$ \\
\hline
 1 & A &
  $\cd[]{
A \ar[d]_{p}  \\
A'
}$
 & $\cd[]{
A \ar@/_1em/[d]_{p} \ar@/^1em/[d]^{\hat{p}} \rtwocell{d}{\sigma}  \\
A'
}$ \\
\hline
  2 &
  $\cd[]{ A\otimes B \ar[r]^-{f} & C }$
  &
$\cd[@R+1em@C+1.5em]{
A \otimes B \ar[d]_{p \otimes q} \ar[r]^{f}
  & C \ar[d]^{r} \\
A' \otimes B' \ar[r]_{f'} \rtwocell{ur}{\zeta}
  & C'
}$
& $
\cd[@!C@R+2em@C+1.5em]{
 A B \ar[r]^{f} \ar@/_1.5em/[d]_{p q} \ar@/^1.5em/[d]^(0.55){\hat{p} \hat{q}} \rtwocell{d}{\sigma \tau}
   & \ar@/_1em/[d]_(0.3){r} \ar@/^1em/[d]^{\hat{r}} \rtwocell{d}{\mu} C \\
 A' B' \ar[r]_{f'}
   & C' \rtwocell[0.3]{ul}{\hat{\zeta}} \rtwocell[0.65]{ul}{\zeta}
}$ \\
\hline
  3  &
 $\cd[@R+1.5em@C-0.7em]{
    (A B) D \rtwocell{drr}{\gamma} \ar[rr]^{\mathfrak{a}} \ar[d]_{f D}
      && A (B D) \ar[d]^{A h} \\
    C D \ar[r]_{g}
      & E
      & A F \ar[l]^{k}
  }$
 & $\cd[@!C@C-.5em]{
 & A(BD) \congcell{dddl} \ar[rr]^{Ah} \ar[dd]_(0.3){p(qs)} \rtwocell{ddrr}{p \xi}
     && AF \ar[dd]^(0.3){pt} \ar[dr]^{k} \rtwocell[0.6]{dddr}{\phi}
     & \\
 (AB)D \ar@[gray][drr] \ar[ur]^{\mathfrak{a}} \ar[dd]_{(pq)s}
     &
     &&
     & \ar[dd]_{u} E \\
 \ar@[gray][d]  & A'(B'D') \ar[rr]_(0.75){B'h'} \utwocell[0.4]{ddrrr}{\gamma'}
     & \ar@[gray][urr] \ar@[gray][dd] \textcolor{gray}{CD}
     & A'F' \ar[dr]^{k'}
     & \\
 (A'B')D' \ar[ur]^{\mathfrak{a}} \ar[drr]_{f'D'}
     &&  && E' \\
 && C'D' \ar[urr]_{g'} &&
 }$ & \cellcolor{gray!25} \\
\hline
  4  & See~\eqref{NB4simplex} on p.\pageref{NB4simplex} & \cellcolor{gray!25} & \cellcolor{gray!25} \\
\hline
 $\ge 5$  & \cellcolor{gray!25} & \cellcolor{gray!25} & \cellcolor{gray!25} \\
\hline
\end{tabular}
\caption[0, 1 and 2-cells in the lowest few dimensions of the simplicial bicategory $\BN\B$]
{
0, 1 and 2-cells in the lowest few dimensions of the simplicial bicategory $\BN\B$.
Grey entries are uniquely determined by coskeletality conditions, see Observation~\ref{obs:4-coskeletal}.
The 2-cells on the front face of the pentagonal prism have been omitted; they can easily be filled in by the reader.
Unlabelled isomorphisms are composites of basic coherence data in $\B$.
The 2-cell $p\xi$ is schematic for the obvious pasting of $\xi$ with $p$ and coherence isomorphisms.
}
\label{table1}
\end{sidewaystable}

\begin{observation}
For any simplicial set $X$, $\mathrm{sSet}(X, \BN\B)$ is a bicategory.
To see this, note that the representable $\mathrm{sSet}(X, \mhyphen) \colon \mathrm{sSet} \to \mathrm{Set}$ preserves limits.
Now if $\BY$ is a bicategory
\begin{equation*}
 \cd[]{
  Y_2 \ar@<-.5ex>[r] \ar@<.5ex>[r] &
  Y_1 \ar@<-.5ex>[r] \ar@<.5ex>[r] &
  Y_0 }
 \end{equation*}
internal to $\mathrm{sSet}$, the 2-globular set
\begin{equation*}
 \cd[]{
  \mathrm{sSet}(X,Y_2) \ar@<-.5ex>[r] \ar@<.5ex>[r] &
  \mathrm{sSet}(X,Y_1) \ar@<-.5ex>[r] \ar@<.5ex>[r] &
  \mathrm{sSet}(X,Y_0) }
 \end{equation*}
is a bicategory which we call $\mathrm{sSet}(X,\BY)$.
\end{observation}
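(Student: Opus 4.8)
The plan is to observe that the notion of ``bicategory internal to a finitely complete category'' is purely finite-limit-presentable, and that $\mathrm{sSet}(X,-)$, being a hom-functor, preserves all limits; transport of structure along such a functor then does the job.

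First I would recall the explicit shape of the data and axioms of a bicategory $\BY$ internal to a finitely complete category $\mathcal{E}$: objects $Y_0, Y_1, Y_2 \in \mathcal{E}$; source and target morphisms $Y_1 \rightrightarrows Y_0$ and $Y_2 \rightrightarrows Y_1$, compatible over $Y_0$; an identity-assigning morphism $Y_0 \to Y_1$ and an identity-$2$-cell morphism $Y_1 \to Y_2$; horizontal composition morphisms $Y_1 \times_{Y_0} Y_1 \to Y_1$ and $Y_2 \times_{Y_0} Y_2 \to Y_2$; vertical composition $Y_2 \times_{Y_1} Y_2 \to Y_2$; and the coherence $2$-cells --- associator, left and right unitors, together with their chosen inverse $2$-cells --- as morphisms out of suitable iterated pullbacks of the $Y_i$ into $Y_2$. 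All of these have domains and codomains built from $Y_0, Y_1, Y_2$ by finite limits, and the bicategory axioms (associativity and unitality of vertical composition, interchange, pentagon, triangle, naturality of the coherence cells, and the equations exhibiting the inverses) are equalities between morphisms whose common domain and codomain are again such finite limits. Since $\mathrm{sSet}$ is complete these limits exist there, and by Observation~\ref{obs:simpl_bicat} the diagram~\eqref{nerve_internal_bicat}, and more generally any $\BY$, carries exactly this structure.

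Next I would invoke the general principle: if $\Phi \colon \mathcal{E} \to \mathcal{F}$ is a functor between finitely complete categories that preserves finite limits, then $\Phi$ carries a bicategory internal to $\mathcal{E}$ to one internal to $\mathcal{F}$. Indeed $\Phi$ sends $Y_i$ to $\Phi Y_i$; because $\Phi$ preserves the relevant pullbacks, the canonical comparison maps $\Phi(Y_i \times_{Y_0} Y_j) \to \Phi Y_i \times_{\Phi Y_0} \Phi Y_j$ are isomorphisms, so applying $\Phi$ to each structure morphism and conjugating by these isomorphisms yields structure morphisms of the same shape on the $\Phi Y_i$; and since $\Phi$ is a functor it preserves the commuting diagrams encoding the axioms. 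Applying this with $\Phi = \mathrm{sSet}(X,-)\colon \mathrm{sSet} \to \mathrm{Set}$ --- which preserves all limits because it is representable, hence a right adjoint --- and with $\mathcal{F} = \mathrm{Set}$, produces a bicategory internal to $\mathrm{Set}$. A bicategory internal to $\mathrm{Set}$ is precisely an ordinary small bicategory: $\mathrm{sSet}(X, Y_0)$ is the set of objects, $\mathrm{sSet}(X, Y_1)$ the set of $1$-cells, $\mathrm{sSet}(X, Y_2)$ the set of $2$-cells, and the transported maps and equations unwind to the familiar bicategory data and laws. Taking $\BY = \BN\B$ then gives that $\mathrm{sSet}(\C, \BN\B)$ is a bicategory.

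The only genuinely delicate point is the first step: one must be slightly careful to present the notion of internal bicategory so that invertibility of the associator and unitors is witnessed by \emph{specified} inverse $2$-cells subject to equations, rather than by an existential clause, since only then is it manifestly preserved by $\Phi$. With the definition packaged this way --- as in the standard presentation of bicategories by a finite-limit theory --- the argument is entirely formal; and since $\mathrm{sSet}(X,-)$ in fact preserves \emph{all} limits, no smallness hypothesis on $\BY$ is needed.
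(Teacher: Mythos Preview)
Your argument is correct and follows exactly the same approach as the paper: the observation in the paper is itself essentially just the statement that the representable $\mathrm{sSet}(X,-)$ preserves limits, and hence transports an internal bicategory in $\mathrm{sSet}$ to one in $\mathrm{Set}$. You have simply unpacked this in more detail, and your remark about packaging invertibility of the coherence cells via specified inverses is a legitimate clarification that the paper leaves implicit.
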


The following observation is useful for understanding the nature of 0,1 and 2-cells in 
$\mathrm{sSet}(X, \BN\B)$.

\begin{observation} \label{obs:4-coskeletal}
Since $\N(\B)$ is 4-coskeletal, its simplices at dimension 5 and above are uniquely determined by their boundary.

This is also true for $\N(\B\downarrow\B)$, but it has a stronger property: a $4$-simplex in $\N(\B\downarrow\B)$ is uniquely determined by its boundary 3-simplices and its source and target 4-simplices in $\N(\B)$.

The simplicial set $\N(\B\Downarrow\B)$ has both of these properties and an even stronger one: each 3-simplex is uniquely determined by its boundary $2$-simplices and its source and target 3-simplices in $\N(\B\downarrow\B)$.
This means that the essential data of these simplicial sets are contained in their lowest $4,3,2$ dimensions respectively.
In particular this means that a 0-cell in $\mathrm{sSet}(X,\BN\B)$, a map $X \to \N(\B)$, is completely determined by its behaviour up to dimension $4$.
A 1-cell in $\mathrm{sSet}(X,\BN\B)$, a map $X \to \N(\B\downarrow\B)$, is completely determined by its behaviour up to dimension $3$ and its source and target.
And a 2-cell in $\mathrm{sSet}(X,\BN\B)$, a map $X \to \N(\B\Downarrow\B)$, is completely determined by its behaviour up to dimension $2$ and its source and target.

In order to be more rigorous we make the following definition.
Let $F \colon X \to Y$ be a map of simplicial sets.
We say that \emph{$F$ is $m$-coskeletal} or that \emph{$X$ is $m$-coskeletal over $Y$} when $F$ has the unique right-lifting property with respect to boundary inclusions $\delta\Delta_n \to \Delta_n$ for all $n>m$.
That is, for all $u,v$ as below there exists a unique $k$ making the both triangles commute.
\begin{equation*}
\cd[]
{
  \partial\Delta_n  \ar@{^{(}->}[d] \ar[r]^{u} & X \ar[d]^{F} \\
  \Delta_n \ar@{-->}[ur]|{k} \ar[r]_{v} & Y
}\ .
\end{equation*}
A simplicial set $X$ is $m$-coskeletal precisely when it is $m$-coskeletal over $\mathbbm{1}$.
We can now restate the observation as:
\begin{itemize}
\item $\N(\B)$ is 4-coskeletal;
\item $\N(\B\downarrow\B)$ is 3-coskeletal over $\N\B \times \N\B$ via $(\N s,\N t)$; and
\item $\N(\B\Downarrow\B)$ is 2-coskeletal over $\N(\B\downarrow\B) \times_{(\N s,\N t)} \N(\B\downarrow\B)$ via $(\N s, \N t)$.
\end{itemize}

To justify our observation, note the following.
The nerve functor $\N \colon \mathrm{MonBicat}_s \to \mathrm{sSet}$ has the property that it sends every locally faithful functor to a 3-coskeletal map and every locally fully faithful functor to a 2-coskeletal map.
The maps
$ (s, t) \colon  (\B \downarrow \B) \to \B \times \B$ and
$ (s, t) \colon  (\B \Downarrow \B) \to (\B \downarrow \B) \times_{(s,t)} (\B \downarrow \B)$
are locally faithful and locally fully faithful respectively.
\end{observation}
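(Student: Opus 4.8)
The plan is to flesh out the justification already sketched into three steps, of which the last is the substantive one. The first bullet is immediate: $\N\B$ is $4$-coskeletal by the very clause of the Definition that governs its high simplices, and since $(\B\downarrow\B)$ and $(\B\Downarrow\B)$ are themselves monoidal bicategories (Observation~\ref{obs:simpl_bicat}), the same clause makes $\N(\B\downarrow\B)$ and $\N(\B\Downarrow\B)$ $4$-coskeletal too. For the other two bullets I would first use that $\N$ preserves limits (Remark~\ref{nerve_right_adjoint}) to identify $\N(\B\times\B)$ with $\N\B\times\N\B$ and $\N\big((\B\downarrow\B)\times_{(s,t)}(\B\downarrow\B)\big)$ with $\N(\B\downarrow\B)\times_{(\N s,\N t)}\N(\B\downarrow\B)$; the two statements then become the claims that $\N(s,t)$ is a $3$-coskeletal map in the first case and a $2$-coskeletal map in the second, and their targets are $4$-coskeletal (a product resp.\ pullback of $4$-coskeletal simplicial sets, coskeletal simplicial sets being closed under limits). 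Everything thus reduces to the general lemma: a morphism $P\colon\mathcal C\to\mathcal D$ in $\mathrm{MonBicat}_s$ that is locally faithful is sent by $\N$ to a $3$-coskeletal map, and one that is locally fully faithful is sent to a $2$-coskeletal map.

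To prove this lemma I would first cut down the values of $n$ to be checked. Since the source and target of $\N P$ are both $4$-coskeletal, $\N P$ automatically has the unique right-lifting property against $\partial\Delta_n\hookrightarrow\Delta_n$ for every $n\ge 5$: lift the $n$-boundary uniquely in the source, then identify the lift with $v$ using uniqueness of fillers in the target. So only $n=4$ and $n=3$ remain, and here I would read off from the Definition which portion of an $n$-simplex of a nerve is carried by its boundary. A $3$-simplex of $\N\mathcal C$ consists of its objects $A_{ij}$ and $1$-cells $A_{ijk}$, which \emph{are} exactly its boundary $2$-simplices, together with one further $2$-cell $A_{0123}$ between two prescribed composite $1$-cells, with no additional condition; a $4$-simplex consists of its objects, $1$-cells, and five $2$-cells $A_{mnpq}$, which \emph{are} exactly its boundary $3$-simplices, subject only to the pasting equation~\eqref{NB4simplex}, so a $4$-simplex is completely determined by its boundary.

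For the $n=4$ lifting problem, a lift $k$ of a $4$-boundary $u$ of $\N\mathcal C$ over a $4$-simplex $v$ of $\N\mathcal D$ must be the $4$-simplex carrying exactly the data recorded by $u$, and this is a legitimate simplex precisely when~\eqref{NB4simplex} holds for that data in $\mathcal C$. The existence of $v$ gives that the analogous equation holds for the $P$-images of that data in $\mathcal D$; since $P$ strictly preserves $\otimes$, composition, whiskering, $\mathfrak a$, $\pi$ and the interchange isomorphisms, that $\mathcal D$-equation is literally $P$ applied to the two sides of the $\mathcal C$-equation, and those two sides are $2$-cells of $\mathcal C$ with equal source and target, so local faithfulness of $P$ forces the $\mathcal C$-equation. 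This yields the required $k$, its uniqueness, and the identity $\N P(k)=v$ (both being $4$-simplices of $\N\mathcal D$ over the same boundary), disposing of $n=4$ for \emph{both} halves of the lemma. For $n=3$, needed only when $P$ is locally fully faithful, a lift $k$ of a $3$-boundary $u$ over a $3$-simplex $v$ must carry the objects and $1$-cells of $u$ together with a $2$-cell $A_{0123}$ of $\mathcal C$ whose $P$-image is the $2$-cell of $v$; strictness of $P$ places the source and target of that $2$-cell in the image of $P$, so local fullness supplies such an $A_{0123}$ and local faithfulness makes it unique.

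Finally I would check that the two comparison maps are of the stated type. Both are morphisms in $\mathrm{MonBicat}_s$ because the monoidal structures on $(\B\downarrow\B)$ and $(\B\Downarrow\B)$ were defined by tensoring underlying data of $\B$, which $(s,t)$ respects on the nose. A $2$-cell of $(\B\downarrow\B)$ is a pair $(\alpha_A,\alpha_B)$ of $2$-cells of $\B$ satisfying~\eqref{comma_two_cell}, and $(s,t)$ simply forgets the constraint~\eqref{comma_two_cell}, hence is locally faithful. Unwinding the pullback, a $2$-cell of $(\B\downarrow\B)\times_{(s,t)}(\B\downarrow\B)$ between objects coming from $(\B\Downarrow\B)$ is exactly a pair $(\alpha_A,\alpha_B)$ satisfying~\eqref{comma_two_cell} for \emph{both} of the $1$-cells $h$ and $k$ underlying its source and target, which is precisely a $2$-cell of $(\B\Downarrow\B)$; so here $(s,t)$ is a bijection on $2$-cell hom-sets, i.e.\ locally fully faithful. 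I expect the main obstacle to be the bookkeeping in the middle two steps: pinning down precisely which part of an $n$-simplex of the nerve is boundary data versus genuinely new, and verifying carefully that a strict morphism of monoidal bicategories transports the pasting equation~\eqref{NB4simplex}, including all the unnamed interchange and pseudonaturality $2$-cells implicit in that diagram, on the nose.
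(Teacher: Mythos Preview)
Your proposal is correct and follows exactly the route the paper sketches: reduce to the general lemma that $\N$ sends locally faithful (resp.\ locally fully faithful) strict morphisms to $3$-coskeletal (resp.\ $2$-coskeletal) maps, then observe that the two $(s,t)$ projections have these properties. The paper merely states this lemma, whereas you actually supply the proof (the reduction to $n\le 4$ via $4$-coskeletality of source and target, the $n=4$ case via faithfulness on the pasting equation~\eqref{NB4simplex}, and the $n=3$ case via full faithfulness on the single $2$-cell datum), so your argument is a faithful and more detailed elaboration of the paper's own justification.
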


\section{Classifying skew monoidales}\label{sec:direct_biequivalence}

In this section we show that simplicial maps from $\C$ to $\N\B$ are skew monoidales in $\B$ in the sense that there is a biequivalence
$$ \mathrm{sSet}(\C, \BN\B) \simeq \mathrm{SkMon}(\B)\ .$$
Before we formally construct this biequivalence let us examine the data of a simplicial map $F \colon \C \to \N \B$ and highlight the difficulties that arise.
The data for such an $F$ consist of the following:
\begin{itemize}
\item A single object $A$ in $\B$
\item Two 1-cells in $\B$
\begin{equation*}
\cd[]{A\otimes A \ar[r]^-{t} & A}\quad\text{and}\quad\cd[]{I\otimes I \ar[r]^-{i} & A}
\end{equation*}
\item Four 2-cells
\begin{equation*}
 \cd{
    (A \otimes A) \otimes A \rtwocell{drr}{a} \ar[rr]^{\mathfrak{a}} \ar[d]_{t \otimes A} & &
    A \otimes (A \otimes A) \ar[d]^{A \otimes t} \\
    A \otimes A \ar[r]_{t} & A & A
    \otimes A \ar[l]^{t}
  }
\end{equation*}
\begin{equation*}
 \cd{
    (A \otimes I) \otimes I \rtwocell{drr}{r} \ar[rr]^{\mathfrak{a}} \ar[d]_{\mathfrak{r}^\centerdot \otimes 1} & &
    A \otimes (I \otimes I) \ar[d]^{A \otimes i} \\
    A \otimes I \ar[r]_{\mathfrak{r}^\centerdot} & A & A
    \otimes A \ar[l]^{t}
  }
\end{equation*}
\begin{equation*}
 \cd{
    (I \otimes I) \otimes A \rtwocell{drr}{\ell} \ar[rr]^{\mathfrak{a}} \ar[d]_{i \otimes 1} & &
    I \otimes (I \otimes A) \ar[d]^{1 \otimes \mathfrak{l}} \\
    A \otimes A \ar[r]_{t} & A & I
    \otimes A \ar[l]^{\mathfrak{l}}
  }
\end{equation*}
\begin{equation*}\label{k}
 \cd{
    (I \otimes I) \otimes I \rtwocell{drr}{k} \ar[rr]^{\mathfrak{a}} \ar[d]_{i \otimes 1} & &
    I \otimes (I \otimes I) \ar[d]^{1 \otimes i} \\
    A \otimes I \ar[r]_{\mathfrak{r}^\centerdot} & A & I
    \otimes A \ar[l]^{\mathfrak{l}}
  }
\end{equation*}
\item And nine equalities
\end{itemize}
\begin{equation}\label{stringaxiom1}
        \def\abc{t}\def\abd{t}\def\abe{t}
        \def\acd{t}\def\ace{t}\def\ade{t}
        \def\bcd{t}\def\bce{t}
        \def\bde{t}
        \def\cde{t}
        \def\abde{a}\def\acde{a}
        \def\abcd{a}\def\abce{a}
        \def\bcde{a}
        \stringpent
\end{equation}
\begin{equation}\label{stringaxiom2}
        \def\abc{\mathfrak{l}}\def\abd{t}\def\abe{t}
        \def\acd{\mathfrak{l}}\def\ace{t}\def\ade{t}
        \def\bcd{i}\def\bce{\mathfrak{r}^\centerdot}
        \def\bde{t}
        \def\cde{\mathfrak{r}^\centerdot}
        \def\abde{a}\def\acde{s_1t}
        \def\abcd{\ell}\def\abce{s_1t}
        \def\bcde{r}
        \stringpent
\end{equation}
\begin{equation}\label{stringaxiom3}
        \def\cde{t}\def\bde{\mathfrak{r}^\centerdot}\def\ade{t}
        \def\bce{\mathfrak{r}^\centerdot}\def\ace{t}\def\abe{\mathfrak{r}^\centerdot}
        \def\bcd{\mathfrak{r}^\centerdot}\def\acd{t}
        \def\abd{\mathfrak{r}^\centerdot}
        \def\abc{i}
        \def\abde{s_2t}\def\abce{r}
        \def\bcde{s_2t.}\def\acde{a}
        \def\abcd{r}
        \stringpent
\end{equation}
\begin{equation}\label{stringaxiom4}
        \def\cde{i}\def\bde{\mathfrak{l}}\def\ade{\mathfrak{l}}
        \def\bce{t}\def\ace{t}\def\abe{t}
        \def\bcd{\mathfrak{l}}\def\acd{\mathfrak{l}}
        \def\abd{t}
        \def\abc{t}
        \def\abde{s_0t}\def\abce{a}
        \def\bcde{\ell}\def\acde{\ell}
        \def\abcd{s_0t}
        \stringpent
\end{equation}
\begin{equation}\label{stringaxiom5}
        \def\cde{\mathfrak{l}}\def\bde{i}\def\ade{\mathfrak{l}}
        \def\bce{i}\def\ace{i}\def\abe{\mathfrak{l}}
        \def\bcd{\mathfrak{l}}\def\acd{i}
        \def\abd{i}
        \def\abc{\mathfrak{l}}
        \def\abde{k}\def\abce{s_2i}
        \def\bcde{s_1i.}\def\acde{s_0i}
        \def\abcd{s_1i}
        \stringpent
\end{equation}
\begin{equation}\label{stringaxiom6}
        \def\cde{i}\def\bde{t}\def\ade{\mathfrak{l}}
        \def\bce{\mathfrak{r}^\centerdot}\def\ace{t}\def\abe{\mathfrak{r}^\centerdot}
        \def\bcd{\mathfrak{r}^\centerdot}\def\acd{\mathfrak{l}}
        \def\abd{i}
        \def\abc{i}
        \def\abde{k}\def\abce{r}
        \def\bcde{s_2i.}\def\acde{\ell}
        \def\abcd{s_0i}
        \stringpent
\end{equation}
\begin{equation}\label{stringaxiom7}
        \def\cde{i}\def\bde{\mathfrak{l}}\def\ade{\mathfrak{l}}
        \def\bce{\mathfrak{r}^\centerdot}\def\ace{t}\def\abe{\mathfrak{r}^\centerdot}
        \def\bcd{i}\def\acd{\mathfrak{l}}
        \def\abd{\mathfrak{r}^\centerdot}
        \def\abc{i}
        \def\abde{s_0s_1c}\def\abce{r}
        \def\bcde{k}\def\acde{\ell}
        \def\abcd{k}
        \stringpent
\end{equation}
\begin{equation}\label{stringaxiom8}
        \def\cde{i}\def\bde{\mathfrak{l}}\def\ade{\mathfrak{l}}
        \def\bce{\mathfrak{r}^\centerdot}\def\ace{t}\def\abe{t}
        \def\bcd{\mathfrak{l}}\def\acd{\mathfrak{l}}
        \def\abd{t}
        \def\abc{\mathfrak{l}}
        \def\abde{s_0t}\def\abce{s_1t}
        \def\bcde{k}\def\acde{\ell}
        \def\abcd{\ell}
        \stringpent
\end{equation}
\begin{equation}\label{stringaxiom9}
        \def\cde{\mathfrak{r}^\centerdot}\def\bde{t}\def\ade{t}
        \def\bce{\mathfrak{r}^\centerdot}\def\ace{t}\def\abe{\mathfrak{r}^\centerdot}
        \def\bcd{i}\def\acd{\mathfrak{l}}
        \def\abd{\mathfrak{r}^\centerdot}
        \def\abc{i}
        \def\abde{s_2t}\def\abce{r}
        \def\bcde{r}\def\acde{s_1t}
        \def\abcd{k}
        \stringpent
\end{equation}
The similarity with skew monoidales in $\B$ is strong but there are some problems.

As previously mentioned, the unit map for a skew monoidale is of the form $I \to A$ but $i$ is a map $I\otimes I
\to A$.
Similarly, the left and right unit constraints for a skew monoidale have different domains and codomains than the $r$ and $\ell$ shown here.
These differences amount to the fact that $I\otimes I$ does not equal $I$; we will deal with this momentarily.
The second problem is that there is an extra coherence 2-cell $k$.
Fortunately, the equality in~\eqref{stringaxiom5} together with the monoidal bicategory axioms force $k$ to be equal to the pasting
\begin{equation} \label{kappastring}
  \vcenter{\hbox{\begin{tikzpicture}[y=0.8pt, x=0.9pt,yscale=-1, inner
        sep=0pt, outer sep=0pt, every text node
        part/.style={font=\scriptsize} ]
  \path[draw=black,line join=miter,line cap=butt,line width=0.650pt]
  (60.5482,860.1807) .. controls (97.7329,834.6731) and (20.2790,833.4806) ..
  node[above=0.07cm,pos=0.5] {$\mathfrak{r}^\centerdot 1$}(21.6649,878.0631) .. controls (22.2692,897.5009) and
(50.8157,910.5973) .. (77.9793,909.6016)
  (83.8468,909.1607) .. controls (91.1546,908.3220) and (98.2018,906.3945) ..
  node[above left=0.055cm,pos=0.55] {$\mathfrak{r}$}(104.3947,903.2213)
  (10.0000,912.3622) .. controls (28.6404,912.3622) and (47.9703,914.2230) ..
  node[above right=0.12cm,at start] {\!\!$i.1$}(68.7323,916.3393)
  (77.0089,917.1848) .. controls (95.0409,919.0225) and (114.1847,920.8922) ..
  node[below=0.074cm,pos=0.5] {$i$}(134.9003,921.7992)
  (140.5333,922.0201) .. controls (146.8711,922.2390) and (153.3558,922.3622) ..
  node[above left=0.12cm,at end] {$1.i$\!\!}(160.0000,922.3622)
  (10.0000,932.3622) .. controls (46.3655,932.3622) and (117.0711,908.0690) ..
  node[above right=0.12cm,at start] {\!$\mathfrak{r}^\centerdot$}(60.5546,892.8673)
  (61.6193,889.4751) .. controls (137.9573,830.7177) and (122.5843,941.9527) ..
  node[above left=0.12cm,at end] {$\mathfrak{l}$\!}(160.0000,942.3622)
  (59.5990,867.2517) .. controls (61.2888,870.8982) and (66.6120,874.7013) ..
  node[above right=0.05cm,pos=0.5,rotate=-35]{$1 \mathfrak{l}$}(73.0425,878.5617)
  (78.0172,881.4569) .. controls (88.8715,887.6317) and (100.9341,893.8888) ..
  node[above right=0.035cm,pos=0.4]{$\mathfrak{l}$}(104.0457,899.8292);
  \path[draw=black,line join=miter,line cap=butt,line width=0.650pt]
  (62.3769,864.7264) .. controls (128.2265,830.2570) and (130.2744,900.0073) ..
  node[above left=0.12cm,at end] {$\mathfrak{a}$\!}(160.0000,902.3622);
  \path[fill=black] (52.527935,864.72632) node[circle, draw, line width=0.65pt, minimum width=5mm, fill=white, inner
sep=0.25mm] (text3790) {$\mu^{\scriptscriptstyle -1}$  };
  \path[fill=black] (55.101635,890.83112) node[circle, draw, line width=0.65pt, minimum width=5mm, fill=white, inner
sep=0.25mm] (text3790-1) {$\theta^{\scriptscriptstyle -1}$   };
  \path[fill=black] (104.09403,901.18524) node[circle, draw, line width=0.65pt, minimum width=5mm, fill=white, inner
sep=0.25mm] (text3790-6) {$\theta$};
      \end{tikzpicture}}}
\end{equation}
and thus completely specified by the coherence data of $\B$.
The third problem is that there are too many axioms!
Fortunately, \eqref{stringaxiom8} and~\eqref{stringaxiom9} hold trivially in any monoidal bicategory.
The remaining six equalities are precisely the five axioms we require (axioms~\eqref{stringaxiom6} and~\eqref{stringaxiom7} are equivalent).

We have yet to resolve the first problem: $i, \ell, r$ have the wrong shape.
This is resolved by constructing a new monoidal bicategory $\B^*$ which is like $\B$, but with unit object $I \otimes I$ and appropriately modified coherence data.
Then simplicial maps from $\C$ to $\N\B$ are exactly skew monoidales in $\B^*$: there is an isomorphism of bicategories
$$ \mathrm{sSet}(\C, \BN\B) \cong \mathrm{SkMon}(\B^*)\ .$$
Then since $\B^* \simeq \B$ we know that $\mathrm{SkMon}(\B^*) \simeq \mathrm{SkMon}(\B)$ and we have the required correspondence.

\begin{definition}
Suppose $\B$ is a monoidal bicategory $\B = (\B, \otimes, I, \mathfrak{a}, \mathfrak{l}, \mathfrak{r},\pi, \mu, \sigma, \tau)$.
Let $\B^*$ be the monoidal bicategory $(\B, \otimes, I\otimes I, \mathfrak{a}, \mathfrak{l}^*, \mathfrak{r}^*, \pi, \mu^*, \sigma^*, \tau^*)$ where the new data are defined as follows.
The new pseudo-natural tranformations $\mathfrak{l}^*$ and $\mathfrak{r}^*$ have 1-cell components
\begin{equation*}
 \mathfrak{l}^*_{A} = \cd[@!C]{(I\otimes I)\otimes A \ar[r]^-{\mathfrak{l}\otimes A} & I\otimes A \ar[r]^-{\mathfrak{l}} & A}
\end{equation*}
  and
\begin{equation*}
 \mathfrak{r}^*_{A} = \cd[@!C]{A \ar[r]^-{\mathfrak{r}} & A\otimes I \ar[r]^-{A\otimes \mathfrak{r}} & A\otimes (I\otimes I)}\ .
\end{equation*}
Their 2-cell components can easily be deduced.
The modifications $\mu^*$, $\sigma^*$ and $\tau^*$ have 2-cell components
\begin{equation*}
\begin{gathered}
\mu^*_{AB} =
 \cd[@!C@C-2.5em]
 {
    && (A(II))B \ar[rr]^{\mathfrak{a}} \congcell{drr} && A(II)B) \ar[dr]^{A(\mathfrak{l}B)} && \\
    & A(IB) \ar[ur]^{(A\mathfrak{r})B} \ar[rrrr]^{\mathfrak{a}} && \dtwocell{d}{\mu} && (AI)B \ar[dr]^{A\mathfrak{l}} & \\
    AB \ar[ur]^{\mathfrak{r}B} \ar[rrrrrr]_{id} &&&&&& AB
 }
\end{gathered}
\end{equation*}
\begin{equation*}
\begin{gathered}
 \sigma^*_{BC} =
 \cd[@!C@C-2.5em]
 {
    & (II)(BC) \congcell{dr} \ar[rr]^{\mathfrak{l}(BC)} && I(BC) \dtwocell{d}{\sigma} \ar[dr]^{\mathfrak{l}} & \\
    ((II)B)C \ar[ur]^{\mathfrak{a}} \ar[rr]_{(\mathfrak{l}B)C} && (IB)C \ar[ur]^{\mathfrak{a}} \ar[rr]_{\mathfrak{l}C} && BC
 }  \\
 \tau^*_{AB} =
 \cd[@!C@C-2em]
 {
 	 & (AB)I \dtwocell{d}{\tau} \ar[dr]^{\mathfrak{a}} \ar[rr]^{(AB)\mathfrak{r}} && (AB)(II) \ar[dr]^{\mathfrak{a}} \\
    AB \ar[ur]^{\mathfrak{r}} \ar[rr]_{A\mathfrak{r}} && A(BI) \congcell{ur} \ar[rr]_{A(B\mathfrak{r})} && A(B(II))
 }
\end{gathered}
\end{equation*}
Unlabelled isomorphisms come from pseudo-naturality of $\mathfrak{a}$.
It is not hard to check that this data satisfies the required axioms and that $\B \simeq \B^*$ as monoidal bicategories.
\end{definition}

\begin{theorem} \label{thm:main1}
For all monoidal bicategories $\B$ there is an isomorphism of bicategories $$ \mathrm{sSet}(\C, \BN\B) \cong \mathrm{SkMon}(\B^*)\ .$$
\end{theorem}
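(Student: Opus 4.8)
The plan is to construct a strict homomorphism of bicategories $\Phi\colon\mathrm{sSet}(\C,\BN\B)\to\mathrm{SkMon}(\B^*)$ and to verify that it is a bijection on $0$-, $1$- and $2$-cells; any such homomorphism has a strict inverse and is therefore an isomorphism of bicategories. The recurring idea is to unpack a simplicial map into its values on the finitely many non-degenerate simplices that matter — using the coskeletality estimates of Observation~\ref{obs:4-coskeletal} — and then to recognise these values, after absorbing the canonical coherence isomorphisms produced by the degeneracy simplices of the various nerves, as precisely the defining data of a skew monoidale, lax monoidal morphism, or monoidal transformation in $\B^*$ (the shifted unit object of $\B^*$ being exactly what makes the shapes of $i$, $\ell$, $r$ match).

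\emph{On objects.} Since $\N\B$ is $4$-coskeletal and $\C$ is $2$-coskeletal, a map $F\colon\C\to\N\B$ amounts to a choice of values on the non-degenerate simplices of $\C$ in dimensions $0,1,2,3$ subject to the requirement that the $2$-cell equality defining a $4$-simplex of $\N\B$ hold at the image of each of the nine non-degenerate $4$-simplices of $\C$. Reading off those values from the explicit description of $\N\B$ — using that $F(s_0\star)=I$ and that the two degeneracies of a $1$-simplex $A$ are $\mathfrak r^\centerdot_A$ and $\mathfrak l_A$ — reproduces exactly the data listed at the start of Section~\ref{sec:direct_biequivalence}: an object $A=Fc$, $1$-cells $t$ and $i$, $2$-cells $a,r,\ell,k$ with the stated boundaries, and the nine equalities~\eqref{stringaxiom1}--\eqref{stringaxiom9}. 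The three reductions recorded there then apply: \eqref{stringaxiom5} forces $k$ to be the canonical pasting~\eqref{kappastring}; \eqref{stringaxiom8} and~\eqref{stringaxiom9} hold in any monoidal bicategory; and \eqref{stringaxiom6} and~\eqref{stringaxiom7} are interderivable. This leaves a tuple $(A,t,i,a,r,\ell)$ subject to~\eqref{stringaxiom1}, \eqref{stringaxiom2}, \eqref{stringaxiom3}, \eqref{stringaxiom4}, \eqref{stringaxiom6}. I would then reinterpret the $\mathfrak l,\mathfrak r,\mu,\sigma,\tau$ occurring in these as the data $\mathfrak l^*,\mathfrak r^*,\mu^*,\sigma^*,\tau^*$ of $\B^*$, checking that — once $\ell$ and $r$ are composed with the canonical isomorphisms built into the relevant degenerate $2$-simplices of $\N\B$ — the boundaries of $a$, $r$, $\ell$ are exactly the squares in the definition of a skew monoidale in $\B^*$, and that the definitions of $\mu^*,\sigma^*,\tau^*$ and of the induced cell $\theta$ of $\B^*$ are tailored so that \eqref{stringaxiom1}--\eqref{stringaxiom4} and~\eqref{stringaxiom6} become, in order, the five skew-monoidale axioms. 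Running this identification backwards — set $k$ to be the pasting~\eqref{kappastring}, after which the discarded equations follow from the same three reductions — shows that $\Phi$ is a bijection on objects.

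\emph{On $1$- and $2$-cells, and functoriality.} For fixed $F,G$, a $1$-cell $F\to G$ is a map $\C\to\N(\B\downarrow\B)$ over $(F,G)$; as $\N(\B\downarrow\B)$ is $3$-coskeletal over $\N\B\times\N\B$, it is determined by its values in dimensions $\le 3$: a $1$-cell $F_c\colon A\to B$ in $\B$ together with $2$-cells $\phi$ and $\psi$ recording compatibility of $F_c$ with $t$ and with $i$, these being constrained only by the defining identity~\eqref{comma_two_cell} of $2$-cells in $\B\downarrow\B$, which at the non-degenerate $3$-simplices $a,\ell,r$ becomes exactly the three axioms of a lax monoidal morphism (and at $k$ is automatic) — so a $1$-cell of $\mathrm{sSet}(\C,\BN\B)$ is a lax monoidal morphism $A\to B$ in $\B^*$. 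Likewise $\N(\B\Downarrow\B)$, being $2$-coskeletal over $\N(\B\downarrow\B)\times_{(\N s,\N t)}\N(\B\downarrow\B)$, makes a $2$-cell a single $2$-cell $\gamma\colon F_c\Rightarrow G_c$ in $\B$ subject to the two equations arising at the non-degenerate $2$-simplices $t$ and $i$ — precisely a monoidal transformation in $\B^*$. Finally I would check that $\Phi$ respects all the bicategory structure: composition of $1$-cells in $\BN\B$ is the ``down-the-page'' pasting in the comma monoidal bicategory $\B\downarrow\B$, which is the very formula defining the composite of lax monoidal morphisms (the coherence maps $\varphi_i$ of the comma bicategory being the pseudofunctoriality constraints of $\otimes$ entering that composite); identities correspond to identities; composition of $2$-cells is composition in $\B$ on both sides; and the associativity and unit constraints, inherited from $\B$ through the comma constructions on one side and from $\B$ on the other, coincide. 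Hence $\Phi$ is a strict bijective homomorphism, so an isomorphism of bicategories.

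\emph{Expected main obstacle.} The substance of the argument is the object-level matching: after resolving every degenerate $2$- and $3$-simplex of $\N\B$ into its constituent instances of $\mu,\sigma,\tau,\theta$ and coherence data, one must verify that~\eqref{stringaxiom1}--\eqref{stringaxiom4} and~\eqref{stringaxiom6} coincide \emph{on the nose} with the five skew-monoidale axioms of $\B^*$ for the prescribed composite data $\mathfrak l^*,\mathfrak r^*,\mu^*,\sigma^*,\tau^*$, and that the three reductions — forcing $k$ to equal the pasting~\eqref{kappastring}, the triviality of~\eqref{stringaxiom8}--\eqref{stringaxiom9}, and the interderivability of~\eqref{stringaxiom6} and~\eqref{stringaxiom7} — actually hold. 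Each of these is a string-diagram calculation that leans on the coherence theorem for tricategories to normalise the unlabelled isomorphisms, and this bookkeeping is where essentially all of the labour lies; the $1$- and $2$-cell levels are strictly easier instances of the same pattern.
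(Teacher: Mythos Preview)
Your proposal is correct and follows essentially the same approach as the paper: unpack the simplicial map via coskeletality into the data $(A,t,i,a,r,\ell,k)$ and the nine equalities~\eqref{stringaxiom1}--\eqref{stringaxiom9}, use~\eqref{stringaxiom5} to determine $k$ and discard~\eqref{stringaxiom8}--\eqref{stringaxiom9} as automatic, then translate $\ell,r$ into $\lambda,\rho$ by composing with canonical invertible cells (the paper uses $\sigma^{-1}$ and $\tau^{-1}$ explicitly, with a mate under the $\mathfrak r \dashv \mathfrak r^\centerdot$ adjunction for $\rho$) and check that the remaining five equalities become the five skew-monoidale axioms in $\B^*$; the $1$- and $2$-cell levels are handled identically in both. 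Your treatment is in one respect more careful than the paper's: you explicitly verify that the bijection respects composition, identities, and the bicategory constraints, which is required for an \emph{isomorphism} of bicategories but which the paper leaves implicit.
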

\begin{proof}

Suppose that $F \colon \C \to \N\B$.
Let us compare the image of $F$ with the data for a skew monoidale in $\B^*$ and demonstrate a correspondence between the two.
At dimensions one and two these data are exactly equal: a single object $A$, a tensor map $t$ and a unit map $i \colon I\otimes I \to A$.
At dimension two, the $2$-cell $a$ has the same form as the associativity constraint $\alpha$ for a skew-monoidale; whilst, as observed above, $k$ is necessarily of the form~\eqref{kappastring}.
On the other hand, the data $\ell$ and $r$ give rise to left and right unit constraints $\lambda$ and $\rho$ upon forming the composites
\begin{equation*}
  \vcenter{\hbox{\begin{tikzpicture}[y=0.8pt, x=0.9pt,yscale=-1, inner
        sep=0pt, outer sep=0pt, every text node
        part/.style={font=\scriptsize} ]
  \path[draw=black,line join=miter,line cap=butt,line width=0.650pt]
  (77.4746,873.6249) .. controls (65.7563,893.8888) and (37.6777,911.3520) ..
  node[below right=0.06cm,pos=0.6] {$\mathfrak{l}$}(22.5254,894.9370)
  (119.7970,902.3622) .. controls (89.8639,902.6202) and (75.3129,897.3809) ..
  node[above left=0.12cm,at start] {$\mathfrak{l}$\!}(63.5684,893.3012)
  (57.8098,891.3693) .. controls (48.0652,888.3122) and (38.8450,886.9959) ..
  node[above right=0.06cm,pos=0.4] {$1\mathfrak{l}$}(21.7678,891.8571);
  \path[draw=black,line join=miter,line cap=butt,line width=0.650pt]
  (77.5761,869.1198) .. controls (65.5753,863.5577) and (30.6214,880.5944) ..
  node[above left=0.06cm,pos=0.5] {$\mathfrak{a}$}(22.2728,888.9282);
  \path[draw=black,line join=miter,line cap=butt,line width=0.650pt]
  (120.0495,882.3622) .. controls (94.0270,882.3622) and (96.0094,871.2505) ..
  node[above left=0.12cm,at start] {$\mathfrak{l} 1$\!}(80.0495,871.2505);
  \path[draw=black,line join=miter,line cap=butt,line width=0.650pt]
  (-10.0000,882.3622) .. controls (-0.5398,882.3622) and (13.5355,882.8673) ..
  node[above right=0.12cm,at start] {\!\!$i. 1$}(19.0914,888.9282);
  \path[draw=black,line join=miter,line cap=butt,line width=0.650pt]
  (-10.0000,902.3622) .. controls (-0.8054,902.3622) and (14.5457,900.5944) ..
  node[above right=0.12cm,at start] {\!$t$}(18.8388,894.0284);
  \path[fill=black] (78.568542,870.78729) node[circle, draw, line width=0.65pt, minimum width=5mm, fill=white, inner
sep=0.25mm] (text3387) {$\sigma^{\scriptscriptstyle -1}$     };
  \path[fill=black] (19.697973,892.25299) node[circle, draw, line width=0.65pt, minimum width=5mm, fill=white, inner
sep=0.25mm] (text3391) {$\ell$   };
      \end{tikzpicture}}} \qquad
\text{and} \qquad
  \vcenter{\hbox{\begin{tikzpicture}[y=0.8pt, x=0.9pt,yscale=-1, inner
        sep=0pt, outer sep=0pt, every text node
        part/.style={font=\scriptsize} ]
  \path[draw=black,line join=miter,line cap=butt,line width=0.650pt]
  (106.0000,862.3622) -- node[above left=0.12cm,at end] {$1 \mathfrak{r}$\!}(130.0000,862.3622);
  \path[draw=black,line join=miter,line cap=butt,line width=0.650pt]
  (77.5241,882.8673) -- node[above left=0.12cm,at end] {$1.Fi$\!\!} (130.0000,882.3622);
  \path[draw=black,line join=miter,line cap=butt,line width=0.650pt]
  (77.5241,886.7048) .. controls (85.1003,895.0386) and (105.7563,902.3622) ..
  node[above left=0.12cm,at end] {$t$\!}(130.0000,902.3622);
  \path[draw=black,line join=miter,line cap=butt,line width=0.650pt]
  (103.9848,865.4840) .. controls (99.6916,869.7771) and (81.5698,871.5449) ..
  node[above left=0.05cm,pos=0.5] {$\mathfrak{a}$}(77.7817,878.8685);
  \path[draw=black,line join=miter,line cap=butt,line width=0.650pt]
  (103.7323,860.6857) .. controls (93.8833,844.1683) and (32.0387,832.8929) ..
  node[above=0.05cm,pos=0.66] {$\mathfrak{r}$}(33.0291,868.4666) .. controls (33.5742,888.0447) and (62.3769,897.3038) ..
  node[below=0.05cm,pos=0.8] {$\mathfrak{r}^\centerdot$}(73.2361,886.1921)
  (72.7259,882.0107) .. controls (66.4734,880.4919) and (47.8665,880.0812) ..
  node[above right=0.05cm,pos=0.66,rotate=-10] {$\mathfrak{r}^\centerdot 1$}(47.6192,867.2444) .. controls (47.4461,858.2562)
and (59.4891,852.3199) .. (74.8220,848.5084)
  (85.9387,846.1826) .. controls (102.4316,843.2893) and (120.1558,842.3313) ..
  node[above left=0.12cm,at end] {$\mathfrak{r}$\!}(130.0000,842.3622);
  \path[fill=black] (75.508904,883.41418) node[circle, draw, line width=0.65pt, minimum width=5mm, fill=white, inner
sep=0.25mm] (text3622) {$r$    };
  \path[fill=black] (104.80333,862.45349) node[circle, draw, line width=0.65pt, minimum width=5mm, fill=white, inner
sep=0.25mm] (text3626) {$\tau^{\scriptscriptstyle -1}$   };
      \end{tikzpicture}}}\ .
\end{equation*}
The assignments $\ell \mapsto \lambda$ and $r \mapsto \rho$ are in fact bijective, the former since it is given by composing with an invertible $2$-cell, and the latter since it is given by composition with an invertible $2$-cell followed by transposition under adjunction.
Thus the 2-dimensional data of $F$ and of a skew monoidale in $\B^*$ are in bijective correspondence.

Finally, after some calculation we find that, with respect to the $\alpha$, $\lambda$ and $\rho$ defined above, equations
\eqref{stringaxiom1},~\eqref{stringaxiom2},~\eqref{stringaxiom3},
\eqref{stringaxiom4},~\eqref{stringaxiom6} and~\eqref{stringaxiom7}
express precisely the five axioms for a skew monoidale in $\B$;
equation~\eqref{stringaxiom5} specifies $Fk$ and nothing more;
whilst equations~\eqref{stringaxiom8} and~\eqref{stringaxiom9} are both equalities which follow using only the axioms for a monoidal bicategory.
Thus, every simplicial map $F \colon \C \to \N\B$ determines a skew monoidale in $\B^*$ and this assignment is bijective.

Suppose next that $\gamma \colon F \to G$ is a 1-cell in $\mathrm{sSet}(\C, \N\B)$.
By Remark~\ref{obs:4-coskeletal}, $\gamma$ is determined by $F$ and $G$ and the data up to dimension 3 of a simplicial map $\gamma$ satisfying
\begin{equation*}
\cd[@!C@C+1.5em]{
 & \N(\B\downarrow\B) \ar[d]^{\N(s,t)} \\
 \C \ar[ur]^{\gamma} \ar[r]_-{(F,G)} & \N(\B) \times \N(\B)
 }\ .
\end{equation*}
This consists of:
\begin{itemize}
\item A single arrow $\gamma_c \colon Fc \to Gc$.
\item Two 2-cells
\begin{equation*}
\cd[]{
A \otimes A \ar[r]^-{Ft} \ar[d]_{\gamma_c \otimes \gamma_c} & A \ar[d]^{\gamma_c} \\
B \otimes B \ar[r]_-{Gt} \rtwocell{ur}{\gamma_{t}} & B
}
\quad \text{and} \quad
\cd[]{
I \otimes I \ar[r]^-{Fi} \ar[d]_{1 \otimes 1} & A \ar[d]^{\gamma_c} \\
I \otimes I \ar[r]_-{Gi} \rtwocell{ur}{\gamma_{i}} & B
}
\end{equation*}
where $A = Fc$ and $B = Gc$.
\item Four equations
\begin{equation} \label{transaxiom1}
\vcenter{\hbox{% [inline block 1: 8 envs, 31068 chars -> data_tex | \begin{tikzpicture}[y=0.80pt, x=0.6pt,yscale=-1, inner %111111111 sep=0pt, outer sep=0pt, every text node part/.style={f...]
}}
\end{equation}
where unlabelled isomorphisms come from pseudo-naturality of $\mathfrak{l}$ and $\mathfrak{r}^\centerdot$.
Displaying these isomorphisms explicitly (rather than using string-crossings) highlights the uniformity of the axioms.
\end{itemize}

We now compare the data for $\gamma$ with the data for a lax monoidal morphism in $\B^*$.
At dimension one these data are exactly equal: a single arrow $\gamma_c \colon A \to B$.
At dimension two, the 2-cell $\gamma_t$ has the same form as the tensor constraint $\phi$ for a lax monoidal morphism; on the other hand, $\gamma_i$ gives rise to unit constraint $\psi$ upon forming the composite
\begin{equation*}
\cd[@C+0.5cm]{
I \otimes I \ar@/_12pt/[d]_{1} \ar[r]^-{Fi} \ar@/^12pt/[d]|{1 \otimes 1} \ar@{}[d]|{\cong} & A
\ar[d]^{\gamma_c} \\
I \otimes I \ar[r]_-{Gi} & \rtwocell[0.35]{ul}{\gamma_{i}}  B
}
\ .
\end{equation*}
The assignment $\gamma_i \mapsto \psi$ is bijective since it is just precomposition with an invertible 2-cell.

Finally, after some calculation we find that, with respect to the $\phi$ and $\psi$ defined above, equations
\eqref{transaxiom1},~\eqref{transaxiom2} and~\eqref{transaxiom3}
express precisely the three axioms for a lax monoidal morphism in $\B$;
equation~\eqref{transaxiom4} is an equality which follows using only the axioms for a monoidal bicategory.
Thus, every 1-cell $\gamma \colon F \rightarrow G$ in $\mathrm{sSet}(\C, \BN\B)$ determines a lax monoidal morphism in $
\B^*$ and this assignment is bijective.

Suppose finally that $\Gamma \colon \gamma \Rightarrow \delta$ is a 2-cell in $\mathrm{sSet}(\C, \N\B)$.
By Remark~\ref{obs:4-coskeletal}, $\Gamma$ is determined by $\gamma$ and $\delta$ and the data up to dimension 2 of a simplicial map $\Gamma$ satisfying
\begin{equation*}
\cd[@!C@C+1.5em]{
 & \N(\B\Downarrow\B) \ar[d]^{\N(s,t)} \\
 \C \ar[ur]^{\Gamma} \ar[r]_-{(\gamma,\delta)} & \N(\B\downarrow\B) \times_{\N(s,t)} \N(\B\downarrow\B)
 }\ .
\end{equation*}
This consists of:
\begin{itemize}
\item A single 2-cell $\Gamma_c \colon \gamma_c \Rightarrow \delta_c$.
\item Two equations
\begin{equation} \label{modaxiom1}
\vcenter{\hbox{
\begin{tikzpicture}[y=0.60pt, x=0.6pt,yscale=-1, inner sep=0pt, outer sep=0pt, every text node part/.style={font=
\scriptsize} ]
\path[draw=black,line join=miter,line cap=butt,line width=0.650pt]
  (210.0000,902.3622) .. controls (250.0000,902.3622) and (300.0000,872.3622) ..
  node[above right=0.12cm, at start] {$Ft$} (300.0000,872.3622);
\path[draw=black,line join=miter,line cap=butt,line width=0.650pt]
  (210.0000,842.3622) .. controls (250.0000,842.3622) and (300.0000,872.3622) ..
  node[above right=0.12cm, at start] {$\gamma_c \gamma_c$} (300.0000,872.3622);
\path[draw=black,line join=miter,line cap=butt,line width=0.650pt]
  (420.0000,902.3622) .. controls (390.0000,902.3622) and (360.0000,892.3622) ..
  node[above left=0.12cm, at start] {$\delta_c$} (360.0000,892.3622);
\path[draw=black,line join=miter,line cap=butt,line width=0.650pt]
  (301.7391,871.0578) .. controls (321.9565,852.1448) and (390.0000,841.4926) ..
  node[above left=0.12cm, at end] {$Gt$} (420.0000,842.5796);
\path[draw=black,line join=miter,line cap=butt,line width=0.650pt]
  (301.5217,873.2318) .. controls (314.1304,883.0144) and (358.2609,892.1448) ..
  node[above, pos=0.5] {$\gamma_c$} (358.2609,892.1448);
\path[fill=black] (360,892.36218) node[circle, draw, line width=0.65pt, minimum
  width=5mm, fill=white, inner sep=0.25mm] (text3097) {$\Gamma_c$    };
\path[fill=black] (300,872.36218) node[circle, draw, line width=0.65pt, minimum
  width=5mm, fill=white, inner sep=0.25mm] (text3101) {$\gamma_t$    };
\end{tikzpicture}
}}
\quad \ = \ \quad
\vcenter{\hbox{
\begin{tikzpicture}[y=0.60pt, x=0.6pt,yscale=-1, inner sep=0pt, outer sep=0pt, every text node part/.style={font=
\scriptsize} ]
\path[draw=black,line join=miter,line cap=butt,line width=0.650pt]
  (341.7391,973.6665) .. controls (353.4783,981.4926) and (385.6522,1002.1448)
  .. node[above left=0.12cm, at end] {$\delta_c$} (419.5652,1002.3622);
\path[draw=black,line join=miter,line cap=butt,line width=0.650pt]
  (338.9130,973.4491) .. controls (323.6957,987.5796) and (257.1739,1002.3622)
  .. node[above right=0.12cm, at end] {$Ft$} (210.4348,1001.9274);
\path[draw=black,line join=miter,line cap=butt,line width=0.650pt]
  (342.1739,971.4926) .. controls (349.7826,964.3187) and (385.0000,943.0144) ..
  node[above left=0.12cm, at end] {$Gt$} (419.7826,942.5796);
\path[draw=black,line join=miter,line cap=butt,line width=0.650pt]
  (338.6957,970.4057) .. controls (325.0000,961.0578) and (290.8696,957.7970) ..
  node[above, pos=0.5] {$\delta_c \delta_c$} (272.3913,953.2318);
\path[draw=black,line join=miter,line cap=butt,line width=0.650pt]
  (267.8261,951.2752) .. controls (267.8261,951.2752) and (240.2174,943.4491) ..
  node[above right=0.12cm, at end] {$\gamma_c \gamma_c$} (210.4348,942.5796);
\path[fill=black] (340,972.36218) node[circle, draw, line width=0.65pt, minimum
  width=5mm, fill=white, inner sep=0.25mm] (text3240-1-9-2) {$\delta_t$    };
\path[fill=black] (270,952.36218) node[circle, draw, line width=0.65pt, minimum
  width=5mm, fill=white, inner sep=0.25mm] (text3057) {$\Gamma_c\Gamma_c$    };
\end{tikzpicture}
}}
\end{equation}
\begin{equation} \label{modaxiom2}
\vcenter{\hbox{
\begin{tikzpicture}[y=0.60pt, x=0.6pt,yscale=-1, inner sep=0pt, outer sep=0pt, every text node part/.style={font=
\scriptsize} ]
\path[draw=black,line join=miter,line cap=butt,line width=0.650pt]
  (210.0000,902.3622) .. controls (250.0000,902.3622) and (300.0000,872.3622) ..
  node[above right=0.12cm, at start] {$Fi$} (300.0000,872.3622);
\path[draw=black,line join=miter,line cap=butt,line width=0.650pt]
  (210.0000,842.3622) .. controls (250.0000,842.3622) and (300.0000,872.3622) ..
  node[above right=0.12cm, at start] {$1 1$} (300.0000,872.3622);
\path[draw=black,line join=miter,line cap=butt,line width=0.650pt]
  (420.0000,902.3622) .. controls (390.0000,902.3622) and (360.0000,892.3622) ..
  node[above left=0.12cm, at start] {$\gamma_c$} (360.0000,892.3622);
\path[draw=black,line join=miter,line cap=butt,line width=0.650pt]
  (301.7391,871.0578) .. controls (321.9565,852.1448) and (390.0000,841.4926) ..
  node[above left=0.12cm, at end] {$Gi$} (420.0000,842.5796);
\path[draw=black,line join=miter,line cap=butt,line width=0.650pt]
  (301.5217,873.2318) .. controls (314.1304,883.0144) and (358.2609,892.1448) ..
  node[above, pos=0.5] {$\gamma_c$} (358.2609,892.1448);
\path[fill=black] (360,892.36218) node[circle, draw, line width=0.65pt, minimum
  width=5mm, fill=white, inner sep=0.25mm] (text3097) {$\Gamma_c$    };
\path[fill=black] (300,872.36218) node[circle, draw, line width=0.65pt, minimum
  width=5mm, fill=white, inner sep=0.25mm] (text3101) {$\gamma_i$    };
\end{tikzpicture}
}}
\quad \ = \ \quad
\vcenter{\hbox{
\begin{tikzpicture}[y=0.60pt, x=0.6pt,yscale=-1, inner sep=0pt, outer sep=0pt, every text node part/.style={font=
\scriptsize} ]
\path[draw=black,line join=miter,line cap=butt,line width=0.650pt]
  (341.7391,973.6665) .. controls (353.4783,981.4926) and (385.6522,1002.1448)
  .. node[above left=0.12cm, at end] {$\gamma_c$} (419.5652,1002.3622);
\path[draw=black,line join=miter,line cap=butt,line width=0.650pt]
  (338.9130,973.4491) .. controls (323.6957,987.5796) and (257.1739,1002.3622)
  .. node[above right=0.12cm, at end] {$Fi$} (210.4348,1001.9274);
\path[draw=black,line join=miter,line cap=butt,line width=0.650pt]
  (342.1739,971.4926) .. controls (349.7826,964.3187) and (385.0000,943.0144) ..
  node[above left=0.12cm, at end] {$Gi$} (419.7826,942.5796);
\path[draw=black,line join=miter,line cap=butt,line width=0.650pt]
  (338.6957,970.4057) .. controls (325.0000,961.0578) and (290.8696,957.7970) ..
  node[above, pos=0.5] {$1 1$} (272.3913,953.2318);
\path[draw=black,line join=miter,line cap=butt,line width=0.650pt]
  (267.8261,951.2752) .. controls (267.8261,951.2752) and (240.2174,943.4491) ..
  node[above right=0.12cm, at end] {$11$} (210.4348,942.5796);
\path[fill=black] (340,972.36218) node[circle, draw, line width=0.65pt, minimum
  width=5mm, fill=white, inner sep=0.25mm] (text3240-1-9-2) {$\delta_i$    };
\path[fill=black] (270,952.36218) node[circle, draw, line width=0.65pt, minimum
  width=5mm, fill=white, inner sep=0.25mm] (text3057) {$1 1$    };
\end{tikzpicture}
}}\ .
\end{equation}
\end{itemize}

This is exactly the data of a monoidal transformation in $\B^*$: a single 2-cell satisfying exactly the required axioms. 
Thus, every 2-cell $\Gamma \colon \gamma \Rightarrow \delta$ in $\mathrm{sSet}(\C, \BN\B)$ determines a monoidal transformation in $\B^*$ and the assignment is bijective.

\end{proof}

The following result follows directly.

\begin{theorem} \label{thm:main2}
For all monoidal bicategories $\B$ there is a biequivalence $$ \mathrm{sSet}(\C, \BN\B) \simeq \mathrm{SkMon}(\B)\ .$$
\end{theorem}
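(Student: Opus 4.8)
The plan is to deduce the biequivalence directly from Theorem~\ref{thm:main1} together with the monoidal biequivalence $\B \simeq \B^*$ recorded above. Theorem~\ref{thm:main1} already provides an isomorphism of bicategories $\mathrm{sSet}(\C,\BN\B) \cong \mathrm{SkMon}(\B^*)$, so it suffices to produce a biequivalence $\mathrm{SkMon}(\B^*) \simeq \mathrm{SkMon}(\B)$ and compose the two. The cleanest route is to observe that the assignment $\B \mapsto \mathrm{SkMon}(\B)$ is the object part of a (weak) $2$-functor from monoidal bicategories and monoidal pseudofunctors to bicategories: a monoidal pseudofunctor $P \colon \B \to \B'$ sends a skew monoidale $(A,i,t,\alpha,\lambda,\rho)$ to a skew monoidale on $PA$ by applying $P$ to all the data and pasting in the monoidal constraint $2$-cells of $P$, and acts similarly on lax monoidal morphisms and monoidal transformations. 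Granting this, the general fact that a $2$-functor preserves internal equivalences — hence carries a biequivalence to a biequivalence — applied to $\B^* \simeq \B$ yields $\mathrm{SkMon}(\B^*) \simeq \mathrm{SkMon}(\B)$.

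For the case at hand this can be made quite explicit and therefore lighter to verify. The biequivalence $\B \simeq \B^*$ has underlying pseudofunctor the \emph{identity} on the bicategory $\B$, equipped with a nontrivial monoidal structure assembled from $\mathfrak l$, $\mathfrak r$ and coherence. Consequently the induced functor $\mathrm{SkMon}(\B^*) \to \mathrm{SkMon}(\B)$ is the identity on the underlying objects, morphisms and $2$-cells of $\B$, and merely re-packages the skew-monoidale structure cells by pasting with the invertible monoidal constraint of this identity-on-$\B$ pseudofunctor; concretely, a skew monoidale in $\B^*$ is an object $A$ with $i \colon I\otimes I \to A$, $t \colon A \otimes A \to A$ and cells $\alpha,\lambda,\rho$, and one pre/post-composes $i$, $\lambda$, $\rho$ with the structural equivalence $\mathfrak l \colon I\otimes I \to I$ to land in $\mathrm{SkMon}(\B)$. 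Since each of these modifications is by an invertible $2$-cell, or by an equivalence $1$-cell, the resulting functor is locally an equivalence and essentially surjective, hence a biequivalence.

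The main obstacle I expect is purely bookkeeping: checking that $P \mapsto \mathrm{SkMon}(P)$ genuinely respects the five skew-monoidale axioms~\eqref{axiom1}--\eqref{axiom5}, together with the three lax-morphism and two transformation axioms — that is, that pasting the data through the coherence $2$-cells $\pi,\mu,\sigma,\tau$ of $P$ turns each axiom in $\B$ into the corresponding axiom in $\B'$. This is a standard but lengthy coherence computation; in the present situation it simplifies substantially because no data of the underlying bicategory of $\B$ is altered, so only the coherence cells of the identity-on-$\B$ monoidal pseudofunctor enter, and these are assembled from $\mathfrak l$, $\mathfrak r$, $\mu$, $\sigma$, $\tau$ and bicategory coherence. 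An alternative that sidesteps functoriality of $\mathrm{SkMon}(-)$ altogether is to exhibit the biequivalence $\mathrm{SkMon}(\B^*) \simeq \mathrm{SkMon}(\B)$ by hand exactly as in the last paragraph, constructing a quasi-inverse directly from the adjoint equivalence $\mathfrak l \colon I\otimes I \to I$, and verifying the unit and counit are invertible monoidal transformations — again a routine check given that all the comparison cells are built from invertible coherence data.
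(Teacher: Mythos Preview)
Your proposal is correct and follows exactly the same route as the paper: invoke Theorem~\ref{thm:main1} for the isomorphism $\mathrm{sSet}(\C,\BN\B)\cong\mathrm{SkMon}(\B^*)$, then use the monoidal biequivalence $\B\simeq\B^*$ to obtain $\mathrm{SkMon}(\B^*)\simeq\mathrm{SkMon}(\B)$. In fact you supply considerably more detail than the paper's own proof, which simply asserts the second biequivalence without further justification.
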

\begin{proof}
From the biequivalence $\B^* \simeq \B$ we can show that $\mathrm{SkMon}(\B^*) \simeq \mathrm{SkMon}(\B)$. This, together with Theorem~\ref{thm:main1}, gives the desired result.
\end{proof}

\begin{remark}[Results for dual notions]
The biequivalence in Theorem \ref{thm:main2} applies to the bicategory of skew monoidales, lax monoidal morphisms and monoidal transformations.
The result is also true if we replace skew monoidales with opskew monoidales or ordinary monoidales.
We only need to change our definition of 3-simplices in $\N\B$ by reversing the direction of the 2-cells or making them invertible.
Similarly, the result holds if we replace lax monoidal morphisms with oplax monoidal morphisms or monoidal morphisms.
We only need to change our definition of 1-cells in $(\B\downarrow\B)$ by reversing the direction of the 2-cells or making them invertible.
\end{remark}

We conclude this section with some remarks on the connection between monoids and lax monoidal functors.
It well known that, for a monoidal category $\V$, there is an equivalence
$$ \mathrm{MonCat}_{\mathrm{lax}}(\mathbbm{1}, \V) \simeq \mathrm{Mon}(\V)$$ 
between lax monoidal functors $\mathbbm{1} \to \V$ and monoids internal to $\V$.
A similar result holds for skew monoidales internal to a monoidal bicategory.

\begin{definition}
Suppose that $\B$ and $\E$ are monoidal bicategories.
A \emph{lax monoidal homomorphism from $\B$ to $\E$} is a homomorphism  $F \colon \B \to \E$ on the underlying bicategories together with pseudo-natural families of maps
$$ \phi_{AB} \colon FA \otimes FB \to F(A \otimes B) \quad
\text{and} \quad \phi_I \colon I \to FI$$
and modifications $\omega, \gamma, \delta$ with (non-invertible) components
\begin{equation}
\cd[@!C@C+0.6cm]{
(FA \otimes FB) \otimes FC \ar[r]^{a} \ar[d]_{\phi \otimes FC} & FA \otimes (FB \otimes FC) \ar[d]^{FA \otimes \phi}\\
F(A \otimes B) \otimes FC \ar[d]_{\phi} & FA \otimes F(B \otimes C) \ar[d]^{\phi} \\
F((A \otimes B) \otimes C) \ar[r]_{Fa} & F(A \otimes (B \otimes C)) \rtwocell{uul}{\omega_{ABC}}
}
\end{equation}
\begin{equation}
\cd[@!C@C+0.2cm]{
I \otimes FA \ar[r]^{\phi \otimes FA} \ar@/_18pt/[ddr]_{l} & FI \otimes FA \ar[d]^{\phi} \\
 & F(IA) \ar[d]^{Fl} \\
 & FA \rtwocell{uul}{\gamma_A}
}
\end{equation}
\begin{equation}
\cd[@!C@C+0.2cm]{
FA \ar[r]^{r} \ar@/_18pt/[ddr]_{Fr} & FA \otimes I \ar[d]^{FA \otimes \phi} \\
 & FA \otimes FI \ar[d]^{\phi} \\
 & F(A \otimes I) \rtwocell{uul}{\delta_A}
}
\end{equation}
satisfying five axioms corresponding directly to those for skew-monoidal categories.
\end{definition}

By giving appropriate definitions of monoidal transformation and monoidal modification one can form a bicategory $\mathrm{MonBicat}_{\mathrm{lax}}(\B,\E)$ whose objects are lax monoidal homomorphisms.
We state the following without proof.

\begin{proposition}
For any monoidal bicategory $\B$, there is a biequivalence
$$ \mathrm{MonBicat}_{\mathrm{lax}}(\mathbbm{1}, \B) \simeq \mathrm{SkMon}(\B)\ .$$
\end{proposition}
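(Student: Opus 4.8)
The plan is to lift, one dimension up, the elementary argument underlying $\mathrm{MonCat}_{\mathrm{lax}}(\mathbbm 1,\V)\simeq\mathrm{Mon}(\V)$: unpack the data of a lax monoidal homomorphism $\mathbbm 1\to\B$, of a monoidal transformation between two such, and of a monoidal modification, and match them term-for-term against a skew monoidale, a lax monoidal morphism, and a monoidal transformation in $\B$. First I would fix $\mathbbm 1$ to be the terminal monoidal bicategory: one object $\star$, only identity $1$- and $2$-cells, $\star\otimes\star=\star$, $I=\star$, and all of $\mathfrak a,\mathfrak l,\mathfrak r$ and the coherence cells equal to the evident identities. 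A homomorphism $F\colon\mathbbm 1\to\B$ of underlying bicategories then amounts, up to a contractible space of choices, to a single object $A:=F\star$ of $\B$, with the comparison cells $F(1_\star)\cong 1_A$ and $F(g)F(f)\cong F(gf)$ forced to be canonical coherence isomorphisms; I would work with a normalised $F$ and carry these cells along.

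With $F$ so described, the lax monoidal structure $(\phi,\phi_I,\omega,\gamma,\delta)$ collapses to genuine skew-monoidale data. The pseudonatural family $\phi$ has the single component $\phi_{\star\star}\colon A\otimes A\to F(\star\otimes\star)=A$, a $1$-cell $t\colon A\otimes A\to A$ whose pseudonaturality data are trivial; $\phi_I\colon I\to FI=A$ is a $1$-cell $i\colon I\to A$, now with exactly the right source and target (so, unlike the simplicial picture of Theorem~\ref{thm:main1}, no passage to $\B^{*}$ is needed); and the modifications $\omega,\gamma,\delta$ each have a single component which, after whiskering with the coherence isomorphisms $F(\mathfrak a^{\mathbbm 1})\cong 1$, $F(\mathfrak l^{\mathbbm 1})\cong 1$, $F(\mathfrak r^{\mathbbm 1})\cong 1$ and pasting with the coherence data of $\B$, acquire precisely the boundaries (and orientations) of the skew-monoidale constraints $\alpha$, $\lambda$, $\rho$ of Section~\ref{sec:monoidal_bicats}. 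The five axioms for a lax monoidal homomorphism then become term-for-term the five skew-monoidale axioms~\eqref{axiom1}--\eqref{axiom5} --- this is the content of the phrase ``five axioms corresponding directly to those for skew-monoidal categories''. I would then repeat the exercise on $1$- and $2$-cells, having first calibrated the definitions of monoidal transformation and monoidal modification in $\mathrm{MonBicat}_{\mathrm{lax}}$ (modelled on~\cite{GPS}) so that their constraint cells are of the \emph{lax}, non-invertible, variety: evaluating a monoidal transformation $F\Rightarrow G$ at $\star$ yields a $1$-cell $\gamma_c\colon A\to B$ together with $2$-cells comparing $\gamma_c\circ t$ with $t\circ(\gamma_c\otimes\gamma_c)$ and $\gamma_c\circ i$ with $i$, i.e.\ exactly a lax monoidal morphism $(\gamma_c,\phi,\psi)$, with the three axioms matching; a monoidal modification evaluates to a single $2$-cell subject to exactly the two axioms for a monoidal transformation of skew monoidales. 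Since composites and identities on each side are formed from the ambient bicategory $\B$ in the same way, these assignments are compatible with composition and units and assemble into a homomorphism $\mathrm{MonBicat}_{\mathrm{lax}}(\mathbbm 1,\B)\to\mathrm{SkMon}(\B)$ which is bijective on $2$-cells, essentially surjective on objects and fully faithful up to the coherence isomorphisms that distinguish non-equal normalisations of $F$; hence a biequivalence (an isomorphism of bicategories if one fixes normalisations throughout).

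The main obstacle is precisely the coherence bookkeeping compressed into ``corresponding directly'': one must check, as in the calculation behind Theorem~\ref{thm:main1}, that pushing the trivial constraints of $\mathbbm 1$ through $F$ and pasting with $\mathfrak a,\mathfrak l,\mathfrak r,\pi,\mu,\sigma,\tau,\theta$ in $\B$ turns the lax-monoidal-homomorphism axioms into exactly the skew-monoidale axioms, and similarly for the morphism and transformation axioms. A secondary, purely definitional, point is to pin down the notions of monoidal transformation and monoidal modification in $\mathrm{MonBicat}_{\mathrm{lax}}$ so that they reproduce \emph{lax} monoidal morphisms (with non-invertible $\phi,\psi$) rather than pseudo ones; once the definitions are calibrated in this way, the remaining verifications are routine diagram chases.
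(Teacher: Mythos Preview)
The paper explicitly states this proposition without proof, remarking only that it is ``not unexpected and easy to verify'' once the definition of lax monoidal homomorphism is set up correctly. Your proposal is precisely the unpacking that remark alludes to, and it is correct: evaluate everything at the unique object of $\mathbbm 1$, observe that the constraint data $\phi_{\star\star},\phi_I,\omega,\gamma,\delta$ become $t,i,\alpha,\lambda,\rho$ with the right boundaries (no $\B^{*}$ detour needed since $\phi_I$ already has source $I$), and check that the five lax-monoidal-homomorphism axioms transcribe the five skew-monoidale axioms; then repeat for $1$- and $2$-cells. Your one genuinely delicate point --- calibrating the definitions of monoidal transformation and monoidal modification in $\mathrm{MonBicat}_{\mathrm{lax}}$ so that their constraint $2$-cells are lax rather than invertible --- is exactly the caveat the paper flags (``we do need to take care that we have defined lax monoidal homomorphisms properly''), so you have identified the only place where something could go wrong.
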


This result is not unexpected and easy to verify, but we do need to take care that we have defined lax monoidal homomorphisms properly.
It is also relevant in light of the following remark.

\begin{remark} \label{intuition}
Suppose that $\V$ is a monoidal category. 
There is an equivalence
$$\mathrm{MonCat}_{\mathrm{lax}}(\mathbbm{1}, \V) \simeq \mathrm{Mon}(\V) $$
mentioned above, between lax monoidal functors from $\mathbbm{1}$ to $\V$ and monoids internal to $\V$. 
The data for a lax monoidal functor consists of a functor $F \colon \mathbbm{1} \to \V$ together with two natural families of maps $\phi_1 \colon 1 \to F1$ and $\phi_{11} \colon F1 \otimes F1 \to F1$ satisfying certain axioms. This is precisely an object $F1$ in $\V$ with a monoid structure and the correspondence is easily extended to an equivalence of categories.

There is a second equivalence
$$\mathrm{MonCat}_{\mathrm{nlax}}(\mathbbm{2}, \V) \simeq \mathrm{MonCat}_{\mathrm{lax}}(\mathbbm{1}, \V)$$
between normal lax functors from $\mathbbm{2}$ to $\V$ and lax functors from $\mathbbm{1}$ to $\V$. 
It exists as part of an adjunction where $\mathbbm{2}$ is the result of taking $\mathbbm{1}$ and freely adding a new unit object together with a map to the old unit object.

There is a third equivalence
$$ \mathrm{sSet}(\N\mathbbm{2}, \N\V) \simeq \mathrm{MonCat}_{\mathrm{nlax}}(\mathbbm{2}, \V) $$
obtained by observing that the nerve functor for monoidal categories is fully-faithful on normal lax functors.

Together, these form a sequence
\begin{equation}\label{seq_equiv_mon_cat}
 \mathrm{sSet}(\N\mathbbm{2}, \N\V) \simeq
  \mathrm{MonCat}_{\mathrm{nlax}}(\mathbbm{2}, \V) \simeq
  \mathrm{MonCat}_{\mathrm{lax}}(\mathbbm{1}, \V) \simeq
  \mathrm{Mon}(\V)
\end{equation}
Since $\mathbb{C}$ is isomorphic to $\N\mathbbm{2}$, this sequence demonstrates a correspondence between simplicial maps $\mathbb{C} \to \N\V$ and monoids internal to $\V$.

We fully expect that this sequence of equivalences can be generalised to the domain of monoidal bicategories and skew monoidales. 
Such a generalisation would require a suitable notion of normal lax functor for monoidal bicategories together with a proof that the nerve construction is essentially fully faithful on such functors.
That work would lead us too far from our current goal and so we leave it for another time.
\end{remark}

\section{Towards skew-monoidal bicategories}\label{sec:skew_monoidal_bicategories}

In this section we give a definition of skew-monoidal bicategory by looking at simplicial maps from $\C$ into a suitably defined nerve of $\mathrm{Bicat}$.
First, we describe a nerve of $\mathrm{Bicat}$ by informally regarding it as a monoidal tricategory, we then examine simplicial maps from $\C$ into this nerve. 
We find that a classification result for monoidal bicategories holds almost immediately, but the corresponding result for skew-monoidal bicategories requires an extra condition on the simplicial maps in question.
We then obtain a definition of skew-monoidal bicategory and find that skew-monoidal bicategories with invertible coherence data are precisely monoidal bicategories in the usual sense.
The data for a skew-monoidal bicategory consists of a single bicategory, tensor and unit maps, three coherence transformations, five coherence modifications and eight axioms.
This means one new coherence modification and five new axioms.

\subsection{A nerve of Bicat}

Let $\mathrm{Bicat}$ be the tricategory of bicategories, homomorphisms, pseudo-natural transformations and modifications.
Informally regarding it as a monoidal tricategory, we take the nerve of $\mathrm{Bicat}$ to be the simplicial set $\N(\mathrm{Bicat})$ defined as follows:
\begin{itemize}
\item There is a unique $0$-simplex $\star$.
\item A $1$-simplex is a bicategory $$\B_{01}\ ;$$ its two faces are necessarily $\star$.
\item A $2$-simplex is given by bicategories $\B_{12}, \B_{02}, \B_{01}$ together with a pseudo-functor
$${F}_{012} \colon {\B}_{12} \times {\B}_{01} \to {\B}_{02}\ ;$$
its three faces are $\B_{12}$, $\B_{02}$, and $\B_{01}$.
\item A $3$-simplex is given by:
\begin{itemize}
\item Objects ${\B}_{ij}$ for each $0 \leqslant i < j
  \leqslant 3$;
\item functors ${F}_{ijk} \colon {\B}_{jk} \times {\B}_{ij} \to {\B}_{ik}$ for
  each $0 \leqslant i < j < k \leqslant 3$;
\item a pseudo-natural transformation $\gamma_{0123}$ whose component at $a,b,c$ is
  \begin{equation*}
   \cd[@C+2em]{ F_{013}(F_{123}(a,b),c) \ar[r]^{\gamma_{0123}} & F_{023}(a, F_{012}(b,c)) }\ ;
  \end{equation*}
\end{itemize}
its four faces are $F_{123}$, $F_{023}$, $F_{013}$ and $F_{012}$.
\item A $4$-simplex is given by:
\begin{itemize}
\item Objects ${\B}_{ij}$ for each $0 \leqslant i < j \leqslant 4$;
\item functors ${F}_{ijk} \colon {\B}_{jk} \times {\B}_{ij} \to {\B}_{ik}$ for each $0 \leqslant i < j < k \leqslant 4$;
\item transformations ${\gamma}_{ijk\ell} \colon {F}_{ij\ell} \circ ({F}_{jk\ell}
  \times 1) \Rightarrow {F}_{ik\ell} \circ (1 \times {F}_{ijk})$ for each $0 \leqslant i < j < k < \ell \leqslant 4$
\item a modification $\Gamma_{01234}$ whose component at $a,b,c,d$ is
  \begin{equation*}
   \cd[@!C@C-3.5cm]{
       &&  F_{024}(F_{012}(a,b),F_{234}(c,d)) \ar[drr]^{\gamma_{0124}} \dtwocell{dd}{\Gamma_{01234}} && \\
      F_{024}(F_{023}(F_{012}(a,b),c),d) \ar[urr]^{\gamma_{0234}} \ar[d]_{F_{024}(\gamma_{0123},d)} &&&&   F_{014}(a,F_{124}
(b,F_{234}(c,d))) \\
	F_{024}(F_{013}(a,F_{123}(b,c)),d) \ar[rrrr]_{\gamma_{0134}} &&&& F_{014}(a,F_{134}(F_{123}(b,c),d)) \ar[u]_{F_{014}(a,\gamma_{1234})}
   }\ ;
  \end{equation*}
its five faces are $\gamma_{1234}$, $\gamma_{0234}$, $\gamma_{0134}$, $\gamma_{0124}$, and $\gamma_{0123}$.
\end{itemize}

\item A $5$-simplex is given by six modifications $\Gamma_{ijk\ell m}$ for $0 \leqslant i < j < k < \ell < m \leqslant 5$ as above satisfying the following equality for each $a,b,c,d,e$.
\begin{equation*}
	\def\AAA{a\Gamma_{12345}} \def\BBB{\Gamma_{02345}} \def\CCC{\Gamma_{01345}}
	\def\DDD{\Gamma_{01245}} \def\EEE{\Gamma_{01235}} \def\FFF{\Gamma_{01234}e}
	\def\abcd{\gamma_{0123}(de)} \def\abce{\gamma_{0124}e} \def\abde{\gamma_{0134}e} \def
\acde{\gamma_{0234}e} \def\bcde{a(\gamma_{1234}e)}
	\def\abcf{\gamma_{0125}} \def\abdf{\gamma_{0135}} \def\acdf{\gamma_{0235}} \def\bcdf{a\gamma_{1235}}
	\def\abef{\gamma_{0145}} \def\acef{\gamma_{0245}} \def\bcef{a\gamma_{1245}}
	\def\adef{\gamma_{0345}} \def\bdef{a\gamma_{1345}}
	\def\cdef{(ab)\gamma_{2345}}
	\def\abcdO{(\gamma_{0123}d)e} \def\adefO{\gamma_{0345}} \def\bcdeO{(a\gamma_{1234})e} \def
\abefO{\gamma_{0145}} \def\abcfO{\gamma_{0125}} \def\cdefO{a(b\gamma_{2345})}
	\def\aa{(((ab)c)d)e} \def\bb{((ab)c)(de)} \def\cc{(ab)(c(de))} \def\dd{a(b(c(de)))} \def\ee{((a(bc))d)e} \def
\ff{(a((bc)d))e} \def\gg{(a(b(cd)))e}
	\def\hh{a((b(cd))e)} \def\ii{a(b((cd)e))} \def\jj{(a(bc))(de)} \def\kk{a(((bc)d)e)} \def\ll{a((bc)(de))} \def\mm{((ab)
(cd))e} \def\nn{(ab)((cd)e)}
	\tricataxiom{20mm}
\end{equation*}
This one of the coherence axioms for a tricategory. 
It is the associahedron of dimension three, sometimes called the Stasheff polytope $K_5$ or the non-abelian 4-cocycle condition~\cite{GPS}.

The three unnamed isomorphisms come from the pseudo-naturality of 3-simplices $\gamma_{ijk\ell}$.
We have abbreviated each $F_{ijk}(F_{...}(F_{...}(ab)c)d)e$ to $(((ab)c)d)e$.
We have also chosen not to display coherence isomorphisms associated to each $F_{ijk}$.
The six faces of this simplex are $\Gamma_{12345}$, $\Gamma_{02345}$, $\Gamma_{01345}$, $\Gamma_{01245}$, $\Gamma_{01235}$,
and $\Gamma_{01234}$.
\item Higher-dimensional simplices are determined by the requirement that\\ $\N(\mathrm{Bicat})$ be $5$-coskeletal.
\end{itemize}

We still need to describe the degenerate simplices.
\begin{itemize}
\item At dimension zero, $s_0(\star) = 1$, the terminal bicategory.
\item At dimension one, $s_0(\B_{01}) \colon 1 \times \B_{01} \to \B_{01}$ and $s_1(\B_{01}) \colon \B_{01} \times 1 \to \B_{01}$ are the obvious projections.
\item Each $s_{j}({F}_{012} \colon {\B}_{12} \times {\B}_{01} \to {\B}_{02})$ for $j=0,1,2$ is a pseudo-natural transformation whose 1-cell components are identities and 2-cell components are coherence data.
\item At dimension four, $s_0(\gamma_{0123} \colon F_{013}(F_{123}(a,b),c) \to F_{023}(a, F_{012}(b,c)))$ is the unique composite of coherence 2-cells filling
\begin{equation*}
   \cd[@!C@C-2.2cm]{
       &&  F_{023}(a, F_{012}(b,c)) \ar@{=}[drr]^{\mathrm{id}}  && \\
     F_{013}(F_{123}(a,b),c) \ar[urr]^{\gamma_{0123}} \ar@{=}[d]_{\mathrm{id}} &&&&   F_{023}(a, F_{012}(b,c)) \\
	F_{013}(F_{123}(a,b),c) \ar@{=}[rrrr]_{\mathrm{id}} &&&& F_{013}(F_{123}(a,b),c) \ar[u]_{\gamma_{0123}}
   }\ ;
\end{equation*}
and the other three are similarly defined.
\item We won't display degenerate 5-simplices; they can be computed using the simplicial identities.
The equalities of pastings they describe are guaranteed to hold by coherence for bicategories.
\end{itemize}

In all of the above we have chosen to use pseudo-functors and pseudo-natural transformations rather than their lax and oplax cousins.
Those other variations might work just as well, but we haven't investigated them in any detail.

\begin{definition}
The \emph{pseudo nerve of $\mathrm{Bicat}$}, called $\N_{\mathrm{p}}\mathrm{Bicat}$, is the same as $\N\mathrm{Bicat}$ with the extra requirement that each $\gamma_{0123}$ be an equivalence and each $\Gamma_{01234}$ an isomorphism.
\end{definition}

\subsection{Skew-monoidal bicategories}\label{subsec:skewmonbicat}

In Section \ref{sec:direct_biequivalence} we showed that a simplicial map $F \colon \C \to \N\B$ was precisely a skew monoidale in a monoidal bicategory $\B$.
In that case, $F$ actually determined an extra datum $k$ and three extra axioms \eqref{stringaxiom5} \eqref{stringaxiom8} \eqref{stringaxiom9}.
Fortunately, \eqref{stringaxiom5} forced $k$ to be equal to a pasting of coherence maps already found in $\B$, and \eqref{stringaxiom8} and \eqref{stringaxiom9} were already true in any monoidal bicategory.
When we look at simplicial maps $\C$ into $\N(\mathrm{Bicat})$ or $\N_{\mathrm{p}}(\mathrm{Bicat})$ we once again find more data and axioms than we might expect.
Our approach to this data will depend on whether we want to describe monoidal bicategories, or skew-monoidal bicategories.

If our goal is to classify monoidal bicategories, we should consider simplicial maps from $\C$ into $\N_p\mathrm{Bicat}$.
In this case, because the maps in question are invertible, most of this data is over-specified and the essential extra data consists of a single equivalence $Fk \colon I \to I$ and a single isomorphism $F\delta \colon \mathrm{id}_I \Rightarrow Fk $ satisying $F\delta Fk = Fk F\delta$. Without presenting every detail: if we consider the set of all monoidal bicategories with this extra data, and also describe a suitable notion of equivalence for them, every such structure is equivalent to one where $Fk$ and $F\delta$ are trivial. Thus we have, up to equivalence, monoidal bicategories.

If our goal is to classify skew-monoidal bicategories, we should consider simplicial maps from $\C$ into $\N\mathrm{Bicat}$.
Unfortunately, we cannot use the same trick as before to eliminate this extra information because the unexpected axioms do not force the unexpected data to be trivial, even up to equivalence.
We don't yet understand what role these `extra' coherence maps might play and for the moment ask that each $F \colon \C \to \N\mathrm{Bicat}$ send the offending simplices in $\C$ to pastings of coherence data in $\mathrm{Bicat}$. 
Specifically, $k$ is mapped to the identity pseudo-natural transformation on the unit $Fi$ and $A9$ is mapped to the unique composite of coherence data with the corresponding boundary.

With this added condition in place, we define skew-monoidal bicategories by examining the data of simplicial maps $\C \to \N(\mathrm{Bicat})$.
For convenience we have used the same notation as~\cite{GPS}.

\begin{definition}
A \emph{skew-monoidal bicategory} consists of:
\begin{itemize}
\item A bicategory $\M$.
\item Two homomorphisms
$$\otimes \colon \M \times \M \to \M \quad \text{ and } \quad I \colon 1 \to \M. $$
\item Three pseudo-natural transformations
  \begin{equation*}
   \cd[@!C@C+1em]{
	\M \times \M \times \M \dtwocell{dr}{\mathfrak{a}} \ar[r]^-{\otimes \times 1} \ar[d]_{1 \times \otimes} & \M \times \M \ar[d]^{\otimes} \\
	\M \times \M \ar[r]_-{\otimes} & \M
   }
  \end{equation*}
  \begin{equation*}
   \cd[@!C@C+1em]{
	\M \ar[r]^-{I \times 1} \ar@/_1em/[dr]_-{\mathrm{id}} & \M \times \M \ar[d]^{\otimes} \\
	\dtwocell[0.6]{ur}{\mathfrak{l}} & \M \ar@{}[l]^{\phantom{\otimes}}
   }
\hspace{4em}
   \cd[@!C@C+1em]{
	\M \ar[d]_{1 \times I} \ar@/^1em/[dr]^-{\mathrm{id}} & \dtwocell[0.6]{dl}{\mathfrak{r}} \ar@{}[l]_{\phantom{I \times 1}} \\
	\M \times \M \ar[r]_-{\otimes} & \M
   }
  \end{equation*}
\item Five modifications with components:
\begin{equation*}
\cd[@!C@C-2cm]{
\dtwocell{ddrrrr}{\pi} & &(A\otimes B)\otimes (C\otimes D) \ar[drr]^{\mathfrak{a}} & & \\
((A\otimes B)\otimes C)\otimes D \ar[urr]^{\mathfrak{a}} \ar[dr]_{\mathfrak{a}\otimes D} & & & & A\otimes (B\otimes (C\otimes D) \\
& (A\otimes (B\otimes C))\otimes D \ar[rr]_{\mathfrak{a}} && A\otimes ((B\otimes C)\otimes D) \ar[ur]_{A \otimes \mathfrak{a}}&
}
\end{equation*}
\begin{equation*}
\cd[@!C@C-6pt@R+6pt]{
A\otimes B \ar[dr]_{\mathfrak{r} \otimes B} \ar[rrr]^{\mathrm{id}} & & & A\otimes B \\
\dtwocell{urrr}{\mu} & (A\otimes I)\otimes B \ar[r]_{\mathfrak{a}} & A\otimes (I\otimes B) \ar[ur]_{A \otimes \mathfrak{l}} &
}
\end{equation*}
\begin{equation*}
\cd[@!C@C-12pt]{
\dtwocell{drr}{\lambda}& I\otimes (A\otimes B) \ar[dr]^{\mathfrak{l}} & \\
(I\otimes A)\otimes B \ar[ur]^{\mathfrak{a}} \ar[rr]_{\mathfrak{l} \otimes B} &  & A\otimes B
}
\end{equation*}
\begin{equation*}
\cd[@!C@C-12pt]{
\dtwocell{drr}{\rho} & (A\otimes B)\otimes I \ar[dr]^{\mathfrak{a}} & \\
A\otimes B \ar[ur]^{\mathfrak{r}} \ar[rr]_{A \otimes \mathfrak{r}} &  & A\otimes (B\otimes I)
}
\end{equation*}
\begin{equation*}
\cd[@!C@C-6pt]{
\dtwocell{drr}{\sigma} & I\otimes I \ar[dr]^{\mathfrak{l}} & \\
I \ar[ur]^{\mathfrak{r}} \ar[rr]_{\mathrm{id}} && I
}.
\end{equation*}
\item All subject to 8 axioms.
Unnamed isomorphisms are either pseudo-naturality data or composites of coherence data in $\M$.
Empty cells are actual equalities.

\begin{equation} \label{AXIOM4_1} %AXIOM 1
	\def\AAA{A\pi} \def\BBB{\pi} \def\CCC{\pi_{BC}}
	\def\DDD{\pi} \def\EEE{\pi_{DE}} \def\FFF{\pi E}
	\def\abcd{\mathfrak{a} (DE)} \def\abce{\mathfrak{a} E} \def\abde{\mathfrak{a} E} \def\acde{\mathfrak{a} E} \def\bcde{A(\mathfrak{a} E)}
	\def\abcf{\mathfrak{a}} \def\abdf{\mathfrak{a}} \def\acdf{\mathfrak{a}} \def\bcdf{A \mathfrak{a}}
	\def\abef{\mathfrak{a}} \def\acef{\mathfrak{a}} \def\bcef{A \mathfrak{a}}
	\def\adef{\mathfrak{a}_{A(BC)}} \def\bdef{A \mathfrak{a}}
     \def\cdef{(AB)\mathfrak{a}}
	\def\abcdO{(\mathfrak{a} D)E} \def\adefO{\mathfrak{a}} \def\bcdeO{(A\mathfrak{a})E} \def\abefO{\mathfrak{a}} \def\abcfO{\mathfrak{a}} \def
\cdefO{A(B\mathfrak{a})}
	\def\aa{(((AB)C)D)E} \def\bb{((AB)C)(DE)} \def\cc{(AB)(C(DE))} \def\dd{A(B(C(DE)))} \def\ee{((A(BC))D)E} \def
\ff{(A((BC)D))E} \def\gg{(A(B(CD)))E}
	\def\hh{A((B(CD))E)} \def\ii{A(B((CD)E))} \def\jj{(A(BC))(DE)} \def\kk{A(((BC)D)E)} \def\ll{A((BC)(DE))} \def\mm{((AB)
(CD))E} \def\nn{(AB)((CD)E)}
	\tricataxiom{20mm}
\end{equation}

\begin{equation} \label{AXIOM4_2} %AXIOM 2
\begin{gathered}
	\cd[@!C@C+.3cm@R+.3cm]
	{
		A(DE) \ar[dr]|{\mathfrak{r}(DE)} \ar@/^16pt/[rrr]^{\mathrm{id}} & \dtwocell{d}{\mu} & A(I(DE)) \dtwocell[0.3]{dr}{A\lambda} \ar[r]_{A\mathfrak{l}} & A(DE) \\
		(AD)E \congcell{r} \ar[u]^{\mathfrak{a}} \ar[dr]_{(\mathfrak{r}D)E} & (AI)(DE) \dtwocell{dr}{\pi} \ar[ur]^{\mathfrak{a}} & A(I(DE)) \ar[ur]|{A(\mathfrak{l}E)} \ar[u]|{A\mathfrak{a}} \congcell{r} & (AD)E \ar[u]_{\mathfrak{a}} \\
		 & ((AI)D)E \ar[u]_{\mathfrak{a}} \ar[r]_{\mathfrak{a}E} & (A(ID))E \ar[u]_{\mathfrak{a}} \ar[ur]_{(A\mathfrak{l})E} &
	}	
    \\ \verteq \\
	\cd[@!C@C+.3cm@R+.3cm]
	{
		A(DE) \congcell{drrr} \ar@/^16pt/[rrr]^{\mathrm{id}} &&& A(DE) \\
		(AD)E \ar[rrr]|{id} \ar[u]^{\mathfrak{a}} \ar[dr]_{(\mathfrak{r}D)E} &&& (AD)E \ar[u]_{\mathfrak{a}}  \\
		& ((AI)D)E \dtwocell{ur}{\mu E} \ar[r]_{\mathfrak{a}E} & (A(ID))E \ar[ur]_{(A\mathfrak{l})E} &
	}	
\end{gathered}
\end{equation}

\begin{equation} \label{AXIOM4_3} %AXIOM 3
\begin{gathered}
	\cd[@!C@C+.3cm@R+.3cm]
	{
		&& (AB)E \ar[drr]^{\mathfrak{a}} \congcell{d} && \\
		(AB)E \ar[urr]^{\mathrm{id}} \ar[r]^{\mathfrak{a}} \ar[dr]_{(A\mathfrak{r})E} & A(BE) \congcell{d} \ar[dr]^{A(\mathfrak{r}E)} \ar[rrr]^{\mathrm{id}} &&& A(BE) \\
		& (A(BI))E \ar[r]_{\mathfrak{a}} & A((BI)E) \dtwocell{ur}{A\mu} \ar[r]_{A\mathfrak{a}} & A(B(IE)) \ar[ur]_{A(B\mathfrak{l})} &
	}	
	\\ \verteq \\
	\cd[@!C@C+.3cm@R+.3cm]
	{
		& \dtwocell[0.6]{dr}{\mu} & (AB)E \ar[drr]^{\mathfrak{a}} && \\
		(AB)E \ar[urr]^{\mathrm{id}} \ar[r]^{\mathfrak{r}E} \ar[dr]_{(A\mathfrak{r})E} & \dtwocell[0.3]{dl}{\rho E} (A(BI))E \ar[r]^{\mathfrak{a}} \ar[d]^{\mathfrak{a}E} & A((BI)E) \ar[dr]^{\mathfrak{a}} \ar[u]^{(AB)\mathfrak{l}} \congcell{rr} && A(BE) \\
		& (A(BI))E \ar[r]_{\mathfrak{a}} \dtwocell{ur}{\pi} & A((BI)E) \ar[r]_{A\mathfrak{a}} & A(B(IE)) \ar[ur]_{A(B\mathfrak{l})} &
	}	
\end{gathered}
\end{equation}

\begin{equation} \label{AXIOM4_4} %AXIOM 4
\begin{gathered}
	\cd[@!C@C+.3cm@R+.3cm]
	{
		II \ar[rrr]^{\mathrm{id}} \ar[dr]^{\mathfrak{r}I} &&& II \\
		I \congcell{r} \ar[u]^{\mathfrak{r}} \ar[dr]_{\mathfrak{r}} & (II)I \dtwocell{ur}{\mu} \dtwocell[0.3]{dr}{\rho} \ar[r]^{\mathfrak{a}} & I(II) \dtwocell[0.2]{dr}{I\sigma} \ar[ur]^{I\mathfrak{l}} & \\
		& II \ar[u]^{\mathfrak{r}} \ar[ur]_{I\mathfrak{r}} \ar@/_24pt/[uurr]_{\mathrm{id}} &&
	}	
	\\ \verteq \\
	\cd[@!C@C+.3cm@R+.3cm]
	{
		II \ar[rrr]^{\mathrm{id}} &&& II \\
		I \ar[u]^{\mathfrak{r}} \ar[dr]_{\mathfrak{r}} & & & \\
		& II \ar@/_24pt/[uurr]_{\mathrm{id}} &&
	}	
\end{gathered}
\end{equation}

\begin{equation} \label{AXIOM4_5} %AXIOM 5
\begin{gathered}	
  \cd[@!C@C+.3cm@R+.3cm]
	{
		 & ((AB)C)I \ar[rr]^{\mathfrak{a}} \dtwocell{dr}{\rho} && (AB)(CI) \ar[dr]^{\mathfrak{a}}  \congcell{dd}& \\
		 (AB)C \ar[ur]^{\mathfrak{r}} \ar[dr]_{\mathfrak{a}} \ar@/_16pt/[urrr]_{(AB)\mathfrak{r}} &&&& A(B(CI)) \\
		 & A(BC)  \ar@/_16pt/[urrr]_{A(B\mathfrak{r})} &&&
	}	
	\\ \verteq \\
	\cd[@!C@C+.3cm@R+.3cm]
	{
		 & ((AB)C)I \ar[dr]^{\mathfrak{a}I} \ar[rr]^{\mathfrak{a}}  \congcell{dd} && (AB)(CI) \ar[dr]^{\mathfrak{a}} & \\
		 (AB)C \ar[ur]^{\mathfrak{r}} \ar[dr]_{\mathfrak{a}} && (A(BC))I  \dtwocell{ur}{\pi}  \dtwocell[0.3]{d}{\rho} \ar[r]^{\mathfrak{a}} & A((BC)I) \dtwocell[0.3]{d}{A \rho} \ar[r]^{A\mathfrak{a}} & A(B(CI)) \\
		 & A(BC) \ar[ur]_{\mathfrak{r}} \ar@/_8pt/[urr]_{A\mathfrak{r}} \ar@/_16pt/[urrr]_{A(B\mathfrak{r})} &&&
	}	
\end{gathered}
\end{equation}

\begin{equation} \label{AXIOM4_6} %AXIOM 6
\begin{gathered}	
  \cd[@!C@C+.3cm@R+.3cm]
	{
		& (II)I \ar[r]^{\mathfrak{a}} \ar@/_12pt/[drr]_{\mathfrak{l}I} & I(II) \ar[dr]^{\mathfrak{l}} \dtwocell{d}{\lambda}  & \\
		II \ar[ur]^{\mathfrak{r}} \ar[dr]_{\mathfrak{l}} & & & II \\
		& I \ar[urr]_{\mathfrak{r}} \congcell{uu}  &&
	}	
  \\ \verteq \\
	\cd[@!C@C+.3cm@R+.3cm]
	{
		& (II)I \dtwocell{d}{\rho}  \ar[r]^{\mathfrak{a}}  & I(II) \ar[dr]^{\mathfrak{l}} & \\
		II \ar[ur]^{\mathfrak{r}} \ar[dr]_{\mathfrak{l}} \ar@/_12pt/[urr]_{I\mathfrak{r}} & & & II \\
		& I \ar[urr]_{\mathfrak{r}} & \congcell{uu} &
	}		
\end{gathered}
\end{equation}

\begin{equation} \label{AXIOM4_7} %AXIOM 7
\begin{gathered}	
  \cd[@!C@C+.3cm@R+.3cm]
	{
		&& (IC)(DE) \congcell{ddd} \ar[drr]^{\mathfrak{a}} \ar@/_16pt/[ddrr]_{\mathfrak{l}(DE)} && \\
		((IC)D)E \ar[urr]^{\mathfrak{a}} \ar@/_16pt/[ddrr]_{(\mathfrak{l}D)E} &&&& I(C(DE)) \dtwocell[0.3]{dll}{\lambda}  \ar[d]^{\mathfrak{l}} \\
		&&&& C(DE) \\
		&& (CD)E  \ar[urr]_{\mathfrak{a}} &&
	}			
  \\ \verteq \\
	\cd[@!C@C+.3cm@R+.3cm]
	{
		&& (IC)(DE) \ar[drr]^{\mathfrak{a}}  \dtwocell{dd}{\pi} && \\
		((IC)D)E \ar[urr]^{\mathfrak{a}} \ar[dr]^{\mathfrak{a}E} \ar@/_24pt/[ddrr]_{(\mathfrak{l}D)E} &&&& I(C(DE)) \ar[d]^{\mathfrak{l}} \\
		& (I(CD))E\ar[rr]^{\mathfrak{a}} \ar[dr]^{\mathfrak{l}E} \dtwocell[0.2]{dl}{\lambda E}  & \dtwocell{d}{\lambda}  & I((CD)E) \congcell{r} \ar[ur]^{I\mathfrak{a}} \ar[dl]_{\mathfrak{l}} & C(DE) \\
		&& (CD)E  \ar[urr]_{\mathfrak{a}} &&
	}		
\end{gathered}
\end{equation}

\begin{equation} \label{AXIOM4_8} %AXIOM 8
\begin{gathered}	
  \cd[@!C@C+.3cm@R+.3cm]
	{
		II \ar[dr]^{\mathfrak{r}I} \ar@/_24pt/[ddrr]_{\mathrm{id}} \ar[rrr]^{\mathrm{id}} &&& II \ar[d]^{\mathfrak{l}} \\
		& (II)I \ar[r]^{\mathfrak{a}} \ar[dr]^{\mathfrak{l}I} \dtwocell{ur}{\mu} \dtwocell[0.2]{dl}{\sigma I} & I(II) \ar[ur]^{I\mathfrak{l}} \dtwocell[0.3]{dl}{\lambda} \ar[d]^{\mathfrak{l}} & \congcell{l} I \\
		&& II \ar[ur]_{\mathfrak{l}} &
	}	
  \\ \verteq \\
	\cd[@!C@C+.3cm@R+.3cm]
	{
		II  \ar@/_16pt/[ddrr]_{\mathrm{id}} \ar[rrr]^{\mathrm{id}} &&& II \ar[d]^{\mathfrak{l}} \\
		& & & I \\
		&& II \ar[ur]_{\mathfrak{l}} &
	}		
\end{gathered}
\end{equation}

\end{itemize}
\end{definition}

\begin{remark}
When $\mathfrak{a}, \mathfrak{l}, \mathfrak{r}$ are equivalences and $\pi, \mu, \rho, \lambda, \sigma$ are isomorphisms this definition becomes equivalent to the usual definition of monoidal bicategory.

Beginning with a skew-monoidal bicategory with invertible coherence maps, just forget $\sigma$ and axioms \eqref{AXIOM4_4}--\eqref{AXIOM4_8} and we have exactly a monoidal bicategory.
Conversely, given a monoidal bicategory we can construct $\sigma$ according to axiom~\eqref{AXIOM4_4}, and axioms~\eqref{AXIOM4_5}--\eqref{AXIOM4_8} are implied by~\eqref{AXIOM4_1}--\eqref{AXIOM4_3} and coherence for tricategories (see Chapter 10 and Appendix C in~\cite{Gurski}).
\end{remark}

%%%% Bibliography

\end{document}